\newtheorem{theorem}{Theorem}
\newtheorem{lemma}[theorem]{Lemma}
\newtheorem{proposition}[theorem]{Proposition}
\numberwithin{equation}{section}
\begin{document}

\title[Boxes, extended boxes, and sets of positive upper density]{Boxes, extended boxes, and sets of positive upper density in the Euclidean space}

\author[P. Durcik]{Polona Durcik}
\address{Polona Durcik, California Institute of Technology, 1200 E California Blvd, Pasadena, CA 91125, USA
\& Schmid College of Science and Technology, Chapman University, One University Drive, Orange, CA 92866, USA}
\email{durcik@caltech.edu}
\email{durcik@chapman.edu}

\author[V. Kova\v{c}]{Vjekoslav Kova\v{c}}
\address{Vjekoslav Kova\v{c}, Department of Mathematics, Faculty of Science, University of Zagreb, Bijeni\v{c}ka cesta 30, 10000 Zagreb, Croatia}
\email{vjekovac@math.hr}

\date{August 27, 2020}

\subjclass[2010]{Primary
05D10; 
Secondary
11B30, 
42B20} 


\begin{abstract}
We prove that sets with positive upper Banach density in sufficiently large dimensions contain congruent copies of all sufficiently large dilates of three specific higher-dimensional patterns. These patterns are: $2^n$ vertices of a fixed $n$-dimensional rectangular box, the same vertices extended with $n$ points completing three-term arithmetic progressions, and the same vertices extended with $n$ points completing three-point corners. Our results provide common generalizations of several Euclidean density theorems from the literature.
\end{abstract}

\maketitle


\section{Introduction}

Euclidean Ramsey theory typically seeks for a given pattern, such as vertices of a square, an arithmetic progression, etc., in a single partition class determined by an arbitrary (or only measurable) coloring of the Euclidean space. Stronger results than the mere coloring theorems are the so-called density theorems, which establish existence of the pattern inside an arbitrary measurable subset of positive density. The appropriate notion of density for this purpose is the \emph{upper Banach density}, defined as
\begin{equation}\label{eq:defofdensity}
\overline{\delta}(A) := \limsup_{N\to\infty} \sup_{x\in\mathbb{R}^d} \frac{|A \cap (x+[0,N]^d)|}{N^d}
\end{equation}
for any measurable $A\subseteq\mathbb{R}^d$. Here and in what follows, $|B|$ denotes the Lebesgue measure of a measurable set $B\subseteq\mathbb{R}^d$.

An interesting class of density results tries to find congruent copies of all sufficiently large dilates of a given pattern. There is always a critical dimension $d_{\textup{min}}$ below which positive statements cannot hold. Since many dimension-related issues are still unresolved, one is often content with proving the claim when $d$ is sufficiently large. An initial result of this type starts with the simplest possible pattern, a pair of points in $\mathbb{R}^d$ for $d\geq 2$, and it was established independently by Bourgain \cite{B86:roth}, Falconer and Marstrand \cite{FM86:dist}, and Furstenberg, Katznelson, and Weiss \cite{FKW90:dist}. Moreover, Bourgain \cite{B86:roth} generalized it to non-degenerate $k$-point patterns, also viewed as vertices of $(k-1)$-dimensional simplices, in $\mathbb{R}^d$ for $d\geq k$.

More recently, Lyall and Magyar \cite{LM16:prod} initiated the consideration of product-type patterns. They proved that, for fixed $a_1,a_2>0$, a positive density subset of $\mathbb{R}^{d_1}\times\mathbb{R}^{d_2}$, $d_1,d_2\geq 2$, contains vertices of a rectangle,
\[ (x_1,x_2), \quad (x_1,x_2+s_2), \quad (x_1+s_1,x_2), \quad (x_1+s_1,x_2+s_2), \]
with $x_1,s_1\in\mathbb{R}^{d_1}$, $x_2,s_2\in\mathbb{R}^{d_2}$, $\|s_1\|_{\ell^2}=\lambda a_1$, and $\|s_2\|_{\ell^2}=\lambda a_2$, for all sufficiently large $\lambda>0$. We write $\|v\|_{\ell^2}$ for the Euclidean norm of a vector $v=(v_1,v_2,\ldots,v_d)\in\mathbb{R}^d$, since later we will also consider more general $\ell^p$-norms, $1<p<\infty$, defined as
\[ \|v\|_{\ell^p} := \big( \sum_{i=1}^{d} |v_i|^p \big)^{1/p}. \]
The particular case $a_1=a_2=1$ corresponds to the search for squares. In the same paper the authors proceed to Cartesian products of two general non-degenerate simplices.

As our first result, we establish a different generalization, replacing (vertices of) rectangles with (vertices of) higher-dimensional rectangular solids. Let $d_1,d_2,\ldots,d_n$ be positive integers. In what follows, a \emph{box} will be a pattern consisting of $2^n$ points in $\mathbb{R}^{d_1}\times\mathbb{R}^{d_2}\times\dots\times\mathbb{R}^{d_n}$ of the form
\begin{equation}\label{eq:box1}
(x_1 + k_1 s_1,\,x_2 + k_2 s_2,\,\dots,\,x_n + k_n s_n), \quad k_1,k_2,\dots,k_n\in\{0,1\}
\end{equation}
for any $x_j,s_j\in\mathbb{R}^{d_j}$, $s_j\neq\mathbf{0}$, $j=1,2,\ldots,n$.

\begin{theorem}\label{thm:main1}
Fix numbers $a_1,a_2,\ldots,a_n>0$. For any positive integers $d_1,d_2,\dots,d_n\geq 5$ and any measurable set $A\subseteq\mathbb{R}^{d_1}\times\mathbb{R}^{d_2}\times\dots\times\mathbb{R}^{d_n}$ with $\overline{\delta}(A)>0$ one can find $\lambda_0>0$ with the property that for any real number $\lambda\geq\lambda_0$ the set $A$ contains a box \eqref{eq:box1} with $x_j,s_j\in\mathbb{R}^{d_j}$ and $\|s_j\|_{\ell^2} = \lambda a_j$, $j=1,2,\ldots,n$.
\end{theorem}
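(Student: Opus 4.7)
The plan is to recast the existence of a box in $A$ as strict positivity of a $2^n$-linear counting form, and to prove positivity by comparison with a smoothed model whose bound follows from density alone. Via the standard rescaling/pigeonhole reduction, it suffices to work with $A\subseteq[0,N]^d$ satisfying $|A|\geq\delta N^d$, where $d=d_1+\cdots+d_n$ and $\delta>0$ depends only on $\overline{\delta}(A)$, and to exhibit a box with $\|s_j\|_{\ell^2}=\lambda a_j$ for a single fixed large $\lambda$; the freedom to choose $N$ then yields the full range of large $\lambda$. Encode boxes via the $2^n$-linear form
\begin{equation*}
\Lambda_\lambda(f) := \int_{\mathbb{R}^d} \int \prod_{k\in\{0,1\}^n} f(x_1+k_1 s_1,\ldots,x_n+k_n s_n)\,\prod_{j=1}^n d\sigma_j^\lambda(s_j)\,dx,
\end{equation*}
where $\sigma_j^\lambda$ is the normalized surface measure on the sphere of radius $\lambda a_j$ in $\mathbb{R}^{d_j}$. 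The goal is then to show $\Lambda_\lambda(\mathbbm{1}_A)>0$ for all sufficiently large $\lambda$.

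Introduce a mollified form $\Lambda_\lambda^\epsilon$ by replacing each $d\sigma_j^\lambda$ with $d\widetilde{\sigma}_j^\lambda := d(\sigma_j^\lambda\ast\varphi_j^{\epsilon\lambda})$, where $\varphi_j^r$ is a nonnegative $L^1$-normalized smooth bump at scale $r$ on $\mathbb{R}^{d_j}$. The strategy now has two ingredients. \emph{Lower bound:} prove $\Lambda_\lambda^\epsilon(\mathbbm{1}_A)\geq c(\delta,\epsilon)>0$ for large $\lambda$, obtained from a Gowers--box Cauchy--Schwarz argument that exploits the smoothness of the mollified kernel at scale $\epsilon\lambda$ and eventually reduces to a $\delta^{2^n}$-type bound times a geometric constant. \emph{Error estimate:} prove $|\Lambda_\lambda(\mathbbm{1}_A)-\Lambda_\lambda^\epsilon(\mathbbm{1}_A)|\to 0$ as $\lambda\to\infty$, for each fixed $\epsilon$. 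Combining the two, one first chooses $\epsilon$ small so that the lower bound is positive, and then takes $\lambda$ large enough to absorb the error, yielding $\Lambda_\lambda(\mathbbm{1}_A)>0$.

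The main obstacle is the error estimate. Decompose the difference of the product kernels telescopically as
\begin{equation*}
\prod_{j=1}^n d\sigma_j^\lambda - \prod_{j=1}^n d\widetilde{\sigma}_j^\lambda = \sum_{j=1}^n \Big(\prod_{i<j} d\sigma_i^\lambda\Big)\,d(\sigma_j^\lambda-\widetilde{\sigma}_j^\lambda)\,\Big(\prod_{i>j} d\widetilde{\sigma}_i^\lambda\Big),
\end{equation*}
so each summand has exactly one ``bad'' factor in direction $j$, with smooth factors elsewhere. For each such summand, apply Cauchy--Schwarz iteratively in the $n-1$ directions carrying smooth kernels to collapse the $2^n$-linear expression to a quadratic form in $s_j$ alone. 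A Plancherel computation in $s_j$ then reduces the estimate to a Fourier-side integral of $|\widehat{\sigma_j^\lambda}(\xi_j)|^2\,|1-\widehat{\varphi_j^{\epsilon\lambda}}(\xi_j)|^2$ against an $L^1$-bounded spectral density, with the second factor confining $\xi_j$ to $|\xi_j|\gtrsim 1/(\epsilon\lambda)$. The Stein--Tomas-type decay $|\widehat{\sigma_j^\lambda}(\xi_j)|\lesssim(\lambda|\xi_j|)^{-(d_j-1)/2}$ then produces an $O_\epsilon(\lambda^{-\beta})$ bound with exponent $\beta=\beta(d_j)>0$; the hypothesis $d_j\geq 5$ is precisely what guarantees enough decay in every direction for the estimate to close. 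The technical heart of the proof is executing this cascade cleanly so that the iterated Cauchy--Schwarz interacts correctly with the $2^n$-vertex structure of the box and each of the $n$ directions is handled symmetrically.
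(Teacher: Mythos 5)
There is a genuine gap, and it sits exactly where the real difficulty of the theorem lies. Your template (mollify the spherical measures, lower-bound the mollified count, bound the error) is the right starting point, but neither half closes at a single scale $\lambda$. First, the error estimate: the mechanism you describe cannot give $O_\epsilon(\lambda^{-\beta})$. On the region $|\xi_j|\gtrsim (\epsilon\lambda)^{-1}$ singled out by $1-\widehat{\varphi_j^{\epsilon\lambda}}$, the decay $|\widehat{\sigma_j^\lambda}(\xi_j)|\lesssim(\lambda|\xi_j|)^{-(d_j-1)/2}$ only yields $\lesssim \epsilon^{(d_j-1)/2}$; the gain is in $\epsilon$, uniformly in $\lambda$, not in $\lambda$ (the relevant frequencies of $\mathbbm{1}_A$ need not be bounded away from $0$ independently of $\lambda$). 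This is precisely the content of the paper's Proposition~\ref{prop:2difference}(a), whose bound is $\varepsilon^{1/4}N^D$ with no decay in $\lambda$. Second, and more seriously, the lower bound $\Lambda_\lambda^\epsilon(\mathbbm{1}_A)\geq c(\delta,\epsilon)$ for the \emph{narrowly} mollified form is not accessible by a Gowers--Cauchy--Schwarz positivity argument: that argument produces lower bounds of the form $(\int f)^{2^n}$ only when the averaging kernel in each $s_j$ is of positive type (a wide bump), not when $s_j$ is confined to an annulus of relative width $\epsilon$. A single-scale lower bound for the thin-annulus count, with a threshold depending only on $\delta$ and $\epsilon$, is essentially a strengthening of the theorem itself, so it cannot serve as an input.

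What is missing is the multiscale mechanism that replaces both claims. The paper lower-bounds only the \emph{widest} mollification $\mathcal{M}^{2,1}_\lambda(f)\gtrsim_{D,\delta}N^D$ (Proposition~\ref{prop:1mainterm}, via Furstenberg--Katznelson and Varnavides, not Cauchy--Schwarz), and then controls the discrepancy $\mathcal{E}^{2,\varepsilon}_\lambda(f)=\mathcal{M}^{2,\varepsilon}_\lambda(f)-b(2,\varepsilon)\mathcal{M}^{2,1}_\lambda(f)$, which is \emph{not} small at any fixed scale, only on average: $\sum_{m=1}^M|\mathcal{E}^{2,\varepsilon}_{\lambda_m}(f)|\lesssim_{D,\varepsilon}N^D$ over $2$-separated scales (Proposition~\ref{prop:3multiscale}), proved via the singular Brascamp--Lieb inequality of Durcik--Thiele applied to the kernel $\sum_m \alpha_m\varphi^{(1)}_{\lambda_m}\otimes\cdots\otimes\varphi^{(n)}_{\lambda_m}$. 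One then argues by contradiction: $M$ bad scales force $MN^D\lesssim_{D,\delta,\varepsilon}N^D$, so there are $O_{D,\delta,\varepsilon}(1)$ of them, which is why $\lambda_0$ depends on $A$ and not merely on $\overline{\delta}(A)$. Without this singular-integral ingredient your argument does not close.
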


The possibility of generalizing the aforementioned results of Lyall and Magyar to $n$-fold products with $n\geq 3$ was announced by the same authors in \cite{LM16:prod}. The present paper and the more recent preprint by Lyall and Magyar \cite{LM19:hypergraphs} achieve this goal for boxes independently of each other and using quite different approaches.
In fact, \cite{LM19:hypergraphs} proves a sharp variant of Theorem~\ref{thm:main1} above, in which the assumptions $d_j\geq 5$ are relaxed to $d_j\geq 2$. It is clearly necessary to assume $d_j\geq 2$: if we had $d_1=1$, then the set of all points with the first coordinate from
\[ \bigcup_{k\in\mathbb{Z}}\big[(k-1/10)a_1,(k+1/10)a_1\big] \]
would be a counterexample, since it would contain no boxes associated with half-integer values of $\lambda$.

The approach pursued in this paper is in the spirit of the paper by Cook, Magyar, and Pramanik \cite{CMP15:roth}, and the same method will allow us to handle certain enlarged patterns we are about to discuss.

\smallskip
Bourgain \cite{B86:roth} also constructed a measurable set $A\in\mathbb{R}^d$ with $\overline{\delta}(A)>0$ such that lengths $\|s\|_{\ell^2}$ of gaps $s$ for all $3$-term arithmetic progressions
\[ x,\ x+s,\ x+2s \]
inside $A$ omit an unbounded set of positive values. This prevents us from having the most obvious candidate for a density theorem for $3$-term arithmetic progressions. On the other hand, Cook, Magyar, and Pramanik \cite{CMP15:roth} showed that the corresponding density theorem still holds if one is allowed to measure sizes of gaps $s$ in the $\ell^p$-norms for $1<p<\infty$, $p\neq 2$.

Our second result is a common generalization of Theorem~\ref{thm:main1} above and Theorem~2.1 from \cite{CMP15:roth}.
Consider $n$ additional points in $\mathbb{R}^{d_1}\times\mathbb{R}^{d_2}\times\dots\times\mathbb{R}^{d_n}$,
\begin{equation}\label{eq:box2}
(x_1 + 2 s_1, x_2, \dots, x_n),\quad (x_1, x_2 + 2 s_2, \dots, x_n),\quad \ldots,\quad (x_1, x_2, \dots, x_n + 2 s_n)
\end{equation}
for given $x_j,s_j\in\mathbb{R}^{d_j}$, $s_j\neq\mathbf{0}$, $j=1,2,\ldots,n$. The union of \eqref{eq:box1} and \eqref{eq:box2} will be called a \emph{$3$AP-extended box}: it has a $3$-term arithmetic progression attached to each edge coming from a fixed vertex of the box. From the aforementioned observation of Bourgain we know that an analogue of Theorem~\ref{thm:main1} for the $3$AP-extended boxes is not possible, so one has to give up on the Euclidean norm.

\begin{theorem}\label{thm:main2}
Fix numbers $a_1,a_2,\ldots,a_n>0$ and an exponent $1<p<\infty$, $p\neq 2$. There exists a dimensional threshold $d_{\textup{min}}$ such that for any positive integers $d_1,d_2,\dots,$ $d_n\geq d_{\textup{min}}$ and any measurable set $A\subseteq\mathbb{R}^{d_1}\times\mathbb{R}^{d_2}\times\dots\times\mathbb{R}^{d_n}$ with $\overline{\delta}(A)>0$ one can find $\lambda_0>0$ with the property that for any real number $\lambda\geq\lambda_0$ the set $A$ contains a $3$AP-extended box \eqref{eq:box1}$\cup$\eqref{eq:box2} with $x_j,s_j\in\mathbb{R}^{d_j}$ and $\|s_j\|_{\ell^p} = \lambda a_j$, $j=1,2,\ldots,n$.
\end{theorem}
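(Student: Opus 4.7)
My plan is to adapt the Fourier-analytic strategy of Cook–Magyar–Pramanik \cite{CMP15:roth} for three-term arithmetic progressions to the present multilinear product setting, running it in parallel with the box-type argument that underlies Theorem~\ref{thm:main1}. Setting $d := d_1+\cdots+d_n$, a standard transference reduces the claim to a quantitative lower bound on the multilinear counting form
\[
\Lambda_\lambda(A) := \iint \prod_{k\in\{0,1\}^n} \mathbf{1}_A\bigl(x+(k_1 s_1,\ldots,k_n s_n)\bigr) \prod_{j=1}^n \mathbf{1}_A\bigl(x+2 s_j \mathbf{e}_j\bigr)\, d\sigma^{(1)}_{\lambda a_1}(s_1)\cdots d\sigma^{(n)}_{\lambda a_n}(s_n)\,dx,
\]
where $\sigma^{(j)}_r$ is the normalized surface measure on $\{s\in\mathbb{R}^{d_j}:\|s\|_{\ell^p}=r\}$, and $2 s_j\mathbf{e}_j$ denotes the vector in $\mathbb{R}^d$ with $2s_j$ in the $j$-th coordinate block and zeros elsewhere. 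A bound $\Lambda_\lambda(A)\gtrsim_\delta 1$ for every $A\subseteq[0,N]^d$ with $|A|\geq\delta N^d$ and $N$ large enough immediately yields the desired configuration.

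The next step is to split each $\sigma^{(j)}_{\lambda a_j}$ as $\mu^{(j)}_\varepsilon + (\sigma^{(j)}_{\lambda a_j}-\mu^{(j)}_\varepsilon)$, where $\mu^{(j)}_\varepsilon$ is the convolution with a smooth bump at scale $\varepsilon\lambda$. Expanding $\Lambda_\lambda(A)$ telescopes into $2^n$ pieces. The fully smoothed piece collapses, via an averaging over mesoscopic cubes in the spirit of Lyall–Magyar, to an inner product of $\mathbf{1}_A$ against a non-negative product kernel and is bounded below by some $c(\delta)>0$ once $\varepsilon$ is fixed sufficiently small. The remaining $2^n-1$ pieces each carry at least one cancellative factor $\sigma^{(j)}_{\lambda a_j}-\mu^{(j)}_\varepsilon$, and have to be shown negligible after averaging in $\lambda$.

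For each such error term I would perform $n$ iterated Cauchy–Schwarz applications, peeling off the box vertices one coordinate block at a time. This bounds the error by a Gowers-type box norm acting on the box factors together with a factor $\mathbf{1}_A(\cdot+2s_j\mathbf{e}_j)$ convolved against $\sigma^{(j)}_{\lambda a_j}-\mu^{(j)}_\varepsilon$. Plancherel then reduces everything to the pointwise Fourier decay
\[
\bigl|\widehat{\sigma^{(j)}_{\lambda a_j}}(\xi)-\widehat{\mu^{(j)}_\varepsilon}(\xi)\bigr| \lesssim (\varepsilon\lambda|\xi|)^{-\alpha}
\]
with some $\alpha=\alpha(p,d_j)>0$, which is exactly the $\ell^p$-spherical estimate developed in \cite{CMP15:roth} and available precisely when $p\neq 2$ and $d_j$ is large enough. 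A pigeonhole over a geometric sequence of scales in $\lambda$, again as in \cite{CMP15:roth}, finally promotes the resulting average lower bound to a pointwise lower bound on $\Lambda_\lambda(A)$ for every $\lambda\geq\lambda_0$.

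The principal obstacle I anticipate is the simultaneous management of two distinct sources of oscillation: the hypercube structure of the box, which calls for an entangled Gowers-type multilinear inequality, and the $n$ dangling three-term-progression endpoints, which call for $\ell^p$-spherical Fourier decay. The order in which the Cauchy–Schwarz steps are carried out and the choice of which variable is frozen at each step must be calibrated so that every one of the $2^n-1$ error terms retains a non-degenerate cancellative slot after the reduction; handling this combinatorial bookkeeping, and making sure the $\sigma$--$\mu_\varepsilon$ factor is never wasted on a non-oscillatory slot, is what will dictate the precise dimensional threshold $d_{\min}$.
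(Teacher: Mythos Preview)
The paper does not prove Theorem~\ref{thm:main2} directly at all: it observes (just before the statement of Theorem~\ref{thm:main3}) that Theorem~\ref{thm:main3} implies Theorem~\ref{thm:main2} via the skew projections $(x_j,y_j)\mapsto y_j-x_j$, and then establishes Theorem~\ref{thm:main3} through the three-proposition scheme of \cite{CMP15:roth} (Propositions~\ref{prop:1mainterm}--\ref{prop:3multiscale}). Your head-on attack on Theorem~\ref{thm:main2} is a legitimate alternative route, but two of your steps do not go through as sketched.

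\smallskip
\textbf{The main-term lower bound.} You assert that the fully smoothed piece ``collapses, via an averaging over mesoscopic cubes in the spirit of Lyall--Magyar, to an inner product of $\mathbf{1}_A$ against a non-negative product kernel.'' For the bare box this is plausible, since the box norm is non-negative and a Cauchy--Schwarz cascade suffices. But the $n$ dangling factors $\mathbf{1}_A(x+2s_j\mathbf{e}_j)$ destroy that positivity structure: already for $n=1$ the smoothed form is a genuine weighted count of three-term progressions, and no soft positivity argument gives a lower bound of order $c(\delta)$. The paper handles this step (Proposition~\ref{prop:1mainterm}, via Lemma~\ref{lm:1combroth}) by invoking the multidimensional Szemer\'edi theorem of Furstenberg--Katznelson, just as \cite{CMP15:roth} invokes Bourgain's Roth-type theorem for its main term.

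\smallskip
\textbf{The multiscale step.} This is the more serious gap. Your decomposition has only two pieces, with a single smoothing parameter $\varepsilon$. If $\varepsilon$ is of order $1$, the Szemer\'edi-type lower bound on the main term is available, but Fourier decay of $\widehat{\sigma}^{(j)}-\widehat{\mu}^{(j)}_\varepsilon$ gives nothing useful for the error. If $\varepsilon$ is small, the error is $O(\varepsilon^\alpha)$, but the main-term lower bound degrades with $\varepsilon$ (Szemer\'edi produces configurations in a full cube, not in a thin $\ell^p$-annulus). The paper resolves this tension by inserting an \emph{intermediate} scale and writing $\mathcal{E}^{p,\varepsilon}_\lambda=\mathcal{M}^{p,\varepsilon}_\lambda - b(p,\varepsilon)\mathcal{M}^{p,1}_\lambda$; the crucial estimate is then $\sum_{m=1}^{M}|\mathcal{E}^{p,\varepsilon}_{\lambda_m}|\lesssim_{D,\varepsilon} N^D$ uniformly in $M$ (Proposition~\ref{prop:3multiscale}). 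This is a genuine multilinear singular-integral bound for an entangled ``cubical'' form, and the paper obtains it from Theorem~\ref{thm:singular}, which in turn rests on the singular Brascamp--Lieb inequality of \cite{DT18}. Your error analysis via iterated Cauchy--Schwarz and pointwise $\ell^p$-spherical decay yields at best a bound $|E_{\lambda_m}|\leq C_\varepsilon$ at each individual scale; summed over $M$ lacunary scales this is $MC_\varepsilon$, which defeats the pigeonhole you invoke at the end. The ``combinatorial bookkeeping'' you flag is real, but the deeper obstacle --- the need for a scale-uniform bound on a singular multilinear form with box structure --- is not addressed in your outline, and the $n=1$ machinery of \cite{CMP15:roth} does not supply it.
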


The pattern consisting of points \eqref{eq:box1} and \eqref{eq:box2} can also be viewed as a subset of the grid
\[ (x_1 + k_1 s_1,\,x_2 + k_2 s_2,\,\dots,\,x_n + k_n s_n), \quad k_1,k_2,\dots,k_n\in\{0,1,2\}, \]
consisting of $3^n$ points. At the moment we are not able to prove a result analogous to Theorem~\ref{thm:main2} for this grid. Larger grids bring further complications: one should first handle longer arithmetic progressions and it is known that additional restrictions on the values of $p$ are needed; see the remarks in \cite{DKR17:corner}.

The same approach will enable a further generalization of Theorems~\ref{thm:main1} and \ref{thm:main2}. The present authors and Rimani\'{c} \cite{DKR17:corner} have raised the generality of the result by Cook, Magyar, and Pramanik \cite{CMP15:roth} from $3$-term arithmetic progressions to \emph{corners}, which are triples of points in $\mathbb{R}^d\times\mathbb{R}^d$ of the form
\[ (x,y),\ (x+s,y),\ (x,y+s) \]
for $x,y,s\in\mathbb{R}^d$, $s\neq\mathbf{0}$. A \emph{corner-extended box} will be a pattern in $(\mathbb{R}^{d_1}\times\mathbb{R}^{d_2}\times\dots\times\mathbb{R}^{d_n})^2$ consisting of $2^n$ points forming a box,
\begin{equation}\label{eq:box3}
(x_1 + k_1 s_1,\,x_2 + k_2 s_2,\,\dots,\,x_n + k_n s_n,\,y_1, y_2, \dots, y_n), \quad k_1,k_2,\dots,k_n\in\{0,1\},
\end{equation}
and $n$ additional points completing corners with $n$ of its edges,
\begin{equation}\label{eq:box4}
(x_1, x_2, \dots, x_n,\,y_1 + s_1, y_2, \dots, y_n),\quad \ldots,\quad (x_1, x_2, \dots, x_n,\,y_1, y_2, \dots, y_n + s_n),
\end{equation}
where $x_j,y_j,s_j\in\mathbb{R}^{d_j}$, $s_j\neq\mathbf{0}$, $j=1,2,\ldots,n$. The following result is a common generalization of Theorem~\ref{thm:main1} above and Theorem~1.2 from \cite{DKR17:corner}.

\begin{theorem}\label{thm:main3}
Fix numbers $a_1,a_2,\ldots,a_n>0$ and an exponent $1<p<\infty$, $p\neq 2$. There exists a dimensional threshold $d_{\textup{min}}$ such that for any positive integers $d_1,d_2,\dots,$ $d_n\geq d_{\textup{min}}$ and any measurable set $A\subseteq(\mathbb{R}^{d_1}\times\mathbb{R}^{d_2}\times\dots\times\mathbb{R}^{d_n})^2$ with $\overline{\delta}(A)>0$ one can find $\lambda_0>0$ with the property that for any real number $\lambda\geq\lambda_0$ the set $A$ contains a corner-extended box \eqref{eq:box3}$\cup$\eqref{eq:box4} with $x_j,y_j,s_j\in\mathbb{R}^{d_j}$ and $\|s_j\|_{\ell^p} = \lambda a_j$, $j=1,2,\ldots,n$.
\end{theorem}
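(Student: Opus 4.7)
The plan is to reduce Theorem~\ref{thm:main3} to a quantitative positivity statement for a $(2^n+n)$-linear counting form
\[
\Lambda_\lambda(f) := \int \Bigl(\prod_{k\in\{0,1\}^n} F_k(x,y,s)\Bigr) \Bigl(\prod_{j=1}^n G_j(x,y,s)\Bigr) \prod_{j=1}^n d\sigma^{p,d_j}_{\lambda a_j}(s_j)\,dx\,dy,
\]
in which $f=\mathbbm{1}_A$ is localized to a large cube provided by the density assumption, $F_k$ denotes the evaluation of $f$ at the box vertex \eqref{eq:box3} indexed by $k\in\{0,1\}^n$, $G_j$ the evaluation at the corner-completing point \eqref{eq:box4}, and $\sigma^{p,d_j}_r$ is (a mollified version of) the surface measure on the $\ell^p$-sphere of radius $r$ in $\mathbb{R}^{d_j}$. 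A uniform lower bound $\Lambda_\lambda(f)\gtrsim c(\overline{\delta}(A))>0$ valid for every $\lambda\geq\lambda_0$ produces a corner-extended box in $A$ at every such scale.

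\textbf{Decomposition.}
Following the template of \cite{CMP15:roth} and \cite{DKR17:corner}, I would split each measure $\sigma^{p,d_j}_{\lambda a_j}$ into three parts depending on two auxiliary parameters $0<\varepsilon<r<1$: a long-scale smoothed piece $\sigma^{p,d_j}_{\lambda a_j}\ast\varphi_{\lambda/r}$, an oscillatory annular piece $\sigma^{p,d_j}_{\lambda a_j}\ast(\varphi_{\lambda\varepsilon}-\varphi_{\lambda/r})$, and a short-scale error $\sigma^{p,d_j}_{\lambda a_j}-\sigma^{p,d_j}_{\lambda a_j}\ast\varphi_{\lambda\varepsilon}$, with $\varphi_t$ a smooth bump of scale $t$. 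Substituting into $\Lambda_\lambda$ expands it into a main (\emph{structured}) term plus oscillatory and error contributions. The main term is essentially a corner-extended-box count against a product of ball convolutions, which is bounded below by a positive quantity depending on $\overline{\delta}(A)$, $p$, $n$, and the $a_j$ via the $2^n$-linear positivity underlying Theorem~\ref{thm:main1} together with the Roth/corner positivity for the $y_j$ directions from \cite{DKR17:corner}.

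\textbf{Error bounds and main obstacle.}
For the oscillatory term I would apply Cauchy--Schwarz sequentially: one application per $y_j$ direction to decouple the single corner factor $G_j$ from the $2^n$ box factors $F_k$, followed by the combinatorial box telescope of Cauchy--Schwarzes from the proof of Theorem~\ref{thm:main1}. After all these steps the oscillatory contribution is dominated by a constant times $\prod_j \|\widehat{\sigma^{p,d_j}_{\lambda a_j}\ast(\varphi_{\lambda\varepsilon}-\varphi_{\lambda/r})}\|_{L^\infty}^{\alpha_j}$ for some positive exponents $\alpha_j$, and the Cook--Magyar--Pramanik decay of the Fourier transform of the $\ell^p$-sphere for $p\neq 2$ makes this $O(\lambda^{-\eta})$ provided $d_j\geq d_{\min}(p,n)$. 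The short-scale error is handled by averaging over a dyadic range $\lambda\in[\lambda_0,R\lambda_0]$ and running a pigeonhole argument to pick a scale on which it is pointwise small. The main obstacle will be the Cauchy--Schwarz bookkeeping: because the $j$th corner tail is attached to only one specific vertex of the box, doing the Cauchy--Schwarzes in the wrong order leaves behind non-oscillatory box factors and destroys the Fourier decay. The correct order appears to be exhausting the $n$ corner directions first and only then unwinding the box telescope; verifying that each intermediate form factors as a product of single-variable $\ell^p$-sphere Fourier multipliers, so that the Cook--Magyar--Pramanik decay applies coordinate-by-coordinate, is the technical heart of the argument and fixes $d_{\min}$.
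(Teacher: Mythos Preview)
Your proposal has a genuine gap in the oscillatory piece. The claim that after the Cauchy--Schwarz telescope this contribution is $O(\lambda^{-\eta})$ cannot be right: after rescaling $\mathbf{s}\mapsto\lambda\mathbf{s}$ the form $\Lambda_\lambda$ is scale-invariant (up to the $N^{2D}$ normalization), so no piece can gain decay in $\lambda$. What you call the oscillatory term corresponds to the paper's
\[
\widetilde{\mathcal{E}}^{p,\varepsilon}_{\lambda}(f)=\widetilde{\mathcal{M}}^{p,\varepsilon}_{\lambda}(f)-b(p,\varepsilon)\,\widetilde{\mathcal{M}}^{p,1}_{\lambda}(f),
\]
and this quantity is \emph{not} small for any individual $\lambda$; it is only controlled after summing over many lacunary scales. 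The paper argues by contradiction: assuming failure along $\lambda_1<\cdots<\lambda_M$ with $\lambda_{m+1}\geq 2\lambda_m$, Propositions~\ref{prop:1mainterm}(b) and~\ref{prop:2difference}(b) force $|\widetilde{\mathcal{E}}^{p,\varepsilon}_{\lambda_m}(f)|\gtrsim_{\delta} N^{2D}$ for each $m$, while Proposition~\ref{prop:3multiscale}(b) gives $\sum_m |\widetilde{\mathcal{E}}^{p,\varepsilon}_{\lambda_m}(f)|^2\lesssim_{\varepsilon} N^{4D}$, yielding $M\lesssim 1$. The proof of that $\ell^2$ multiscale bound is the analytical core of the paper: after one Cauchy--Schwarz in the $s_1$ variable it reduces to a $2^{n+1}$-linear singular Brascamp--Lieb inequality (Theorem~\ref{thm:singular}(b)), which is taken from \cite{DT18} and is far beyond what iterated Cauchy--Schwarz plus $\ell^p$-sphere Fourier decay can deliver once $n\geq 2$. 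Your proposed ``box telescope of Cauchy--Schwarzes'' will not terminate in a product of one-variable Fourier multipliers here; the entanglement of the box variables is exactly what makes the \cite{DT18} machinery necessary.

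A second issue concerns your short-scale error. Pigeonholing over a dyadic range of $\lambda$ would at best produce the pattern at \emph{some} scales, whereas the theorem requires it for \emph{every} $\lambda\geq\lambda_0$. The paper handles this piece uniformly in $\lambda$: Proposition~\ref{prop:2difference}(b) shows $|\widetilde{\mathcal{N}}^{p}_{\lambda}(f)-\widetilde{\mathcal{M}}^{p,\varepsilon}_{\lambda}(f)|\lesssim \varepsilon\, N^{2D}$ via a Gowers $\textup{U}^3$-norm bound (Lemma~\ref{lm:2combcs}(b)) combined with the Cook--Magyar--Pramanik estimate $\|\omega^{d,p,\eta}-\omega^{d,p,\varepsilon}\|_{\textup{U}^3}\lesssim\varepsilon$ for $p\neq 2$ and $d$ large (Lemma~\ref{lm:3combgowers}(b)). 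This, not the oscillatory piece, is where the dimensional threshold $d_{\textup{min}}(p)$ actually enters.
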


Theorem~\ref{thm:main3} implies Theorem~\ref{thm:main2}, as can be seen by considering the skew projections $(x_j,y_j)\mapsto y_j-x_j$; see \cite{DKR17:corner} for details. Consequently, it is still necessary to assume $p\neq 2$, while the endpoint cases $p=1$ and $p=\infty$ clearly do not allow any nontrivial results; see the comments in \cite{CMP15:roth}.

Let us emphasize that the dimensional threshold $d_{\textup{min}}$ in Theorems~\ref{thm:main2} and \ref{thm:main3} depends on the exponent $p$. Careful analysis of the arguments below can give $d_{\textup{min}}=O(p)$ for each fixed $n$, but we do not even have counterexamples to the possibility $d_{\textup{min}}=2$. The optimal value of $d_{\textup{min}}$ is still far from understood, even in the case $n=1$ studied in \cite{CMP15:roth} and \cite{DKR17:corner}.

Lyall and Magyar \cite{LM18:graphs} also worked on the Euclidean embedding of all large dilates of a fixed distance graph. Their results do not include Theorem~\ref{thm:main1}, since the boxes (or even rectangles) are simultaneously ``too rigid'' and ``too degenerate;'' compare with the definition of a \emph{proper $k$-degenerate distance graph} from \cite{LM18:graphs}. They also clearly do not overlap with Theorems~\ref{thm:main2} and \ref{thm:main3}, simply because these theorems fail in the Euclidean metric.

\smallskip
As we have already mentioned, our method of approach is based on the paper by Cook, Magyar, and Pramanik \cite{CMP15:roth}. This method reduces Theorems~\ref{thm:main1}--\ref{thm:main3} to boundedness of certain multilinear singular integral operators. In order to obtain bounds for these operators we invoke the main result from the recent paper by Thiele and one of the present authors \cite{DT18}, which in turn uses techniques gradually developed in a series of papers including \cite{K12:tp}, \cite{D15:L4}, \cite{DKR17:corner}, \cite{DKST17:nvea}, and \cite{DKT16:splx}.

In Section~\ref{sec:scheme} we list the main ingredients of the proofs in the form of several propositions and we explain how they imply the three theorems. Section~\ref{sec:combinatorics} establishes the propositions that belong to the combinatorial part of the proof, by either invoking \cite{CMP15:roth}, or performing necessary modifications. Section~\ref{sec:analysis} establishes the propositions dealing with singular integral operators, which constitute the analytical part of the proof.


\section{Scheme of the proofs}
\label{sec:scheme}

We have already explained how Theorem~\ref{thm:main2} can be derived from Theorem~\ref{thm:main3}, so in this section we give outlines of proofs of Theorems~\ref{thm:main1} and \ref{thm:main3}. In complete analogy with the steps from \cite{CMP15:roth}, they will be reduced to Propositions~\ref{prop:1mainterm}--\ref{prop:3multiscale} below.

If $A,B\colon\mathcal{D}\to[0,\infty)$ are two functions or functionals for which there exists a finite constant $C$ depending on a set of parameters $P$ such that $A(x)\leq CB(x)$ for each $x\in\mathcal{D}$, then we write
\[ A(x)\lesssim_P B(x). \]
If both $A(x)\lesssim_P B(x)$ and $B(x)\lesssim_P A(x)$, then we write
\[ A(x)\sim_P B(x). \]
The parameters in $P$ that are understood throughout the text will be omitted from this notation. In particular, it will always be understood that all constants implicit in the notation $\lesssim$ and $\sim$ depend on the fixed vector of positive numbers
\[ \mathbf{a} := (a_1,a_2,\ldots,a_n), \]
which determines the shape of the patterns (the aspect ratios of the boxes), and the exponent $p$, which is relevant to the proof of Theorem~\ref{thm:main3} only.

Characteristic function (i.e.\@ the indicator function) of a set $E$ will be written as $\mathbbm{1}_E$.
Let us write $g_t$ for an $\textup{L}^1$-normalized dilate of a function $g\colon\mathbb{R}^d\to\mathbb{C}$ by a factor $t>0$, i.e.,
\begin{equation}\label{eq:funcdilate}
g_t(s) := t^{-d}g(t^{-1}s)
\end{equation}
for each $s\in\mathbb{R}^d$.
The Fourier transform of $\textup{L}^1$ functions is normalized as
\[ \widehat{g}(\xi) := \int_{\mathbb{R}^d} g(s) e^{-2\pi i s\cdot\xi} \,\textup{d}s, \]
where $s\cdot\xi$ stands the standard scalar product of vectors $s$ and $\xi$ in $\mathbb{R}^d$.
If $\sigma$ is a measure on Borel subsets of $\mathbb{R}^d$, then we define its dilate by $t>0$ as another measure $\sigma_t$ given as
\begin{equation}\label{eq:measdilate}
\sigma_{t}(E):=\sigma(t^{-1}E)
\end{equation}
for each Borel set $E\subseteq\mathbb{R}^d$. A consequence of a linear change of variables is
\[ \int_{\mathbb{R}^d} f(s) \,\textup{d}\sigma_{t}(s) = \int_{\mathbb{R}^d} f(ts) \,\textup{d}\sigma(s) \]
for any measurable function $f\colon\mathbb{R}^d\to\mathbb{C}$ such that the above integrals exist.
Notation \eqref{eq:funcdilate} and \eqref{eq:measdilate} is mutually consistent when $\sigma$ is absolutely continuous with respect to the Lebesgue measure with density $g$. Occasionally we will need an $\textup{L}^p$-normalized dilate of $g\colon\mathbb{R}^d\to\mathbb{C}$ by $t>0$, for a more general exponent $1\leq p<\infty$, which will be denoted $\textup{D}_t^{p}g$ and defined as
\begin{equation}\label{eq:funcdilatep}
\textup{D}_t^{p}g(s) := t^{-d/p}g(t^{-1}s).
\end{equation}

Let us fix an exponent $1<p<\infty$; it will simply be $p=2$ in relation with Theorem~\ref{thm:main1}, while the proof of Theorem~\ref{thm:main3} will assume $p\neq 2$. We introduce a measure $\sigma^{d,p}$ on Borel subsets of $\mathbb{R}^{d}$ in the Dirac $\delta$ notation as
\[ \sigma^{d,p}(s) := \delta\big(1-\|s\|_{\ell^p}^p\big) \]
or, less formally and abusing the integral representation for the Fourier transform, as
\[ \sigma^{d,p}(s) = \int_{\mathbb{R}} e^{-2\pi i u (1-\|s\|_{\ell^p}^p)} \,\textup{d}u. \]
Its dilate $\sigma^{d,p}_{\lambda}$ by $\lambda>0$ is clearly supported on the $\textup{C}^1$ surface $\{s\in\mathbb{R}^d : \|s\|_{\ell^p} = \lambda\}$.
Let us also fix a Schwartz function $\psi\colon\mathbb{R}\to[0,1]$ such that $\widehat{\psi}\geq 0$, $\widehat{\psi}$ is supported in $[-4,4]$, $\widehat{\psi}(1)>0$, and $\psi(0)=1$. For instance, we can take a $\textup{C}^\infty$ function $\rho\colon\mathbb{R}\to[0,\infty)$ such that $\rho>0$ on $[-1,1]$, $\rho=0$ outside $[-2,2]$, and $\rho$ has integral $1$; then we can simply set $\psi=|\widehat{\rho}|^2$. Any constants implicit in the notation $\lesssim$ and $\sim$ will also be understood to depend on $\psi$. Furthermore, for any $\varepsilon>0$ we introduce a function $\omega^{d,p,\varepsilon}\colon\mathbb{R}^d\to\mathbb{C}$ by the formula
\[ \omega^{d,p,\varepsilon}(s) := \int_{\mathbb{R}} e^{-2\pi i u (1-\|s\|_{\ell^p}^p)} \psi(\varepsilon u) \,\textup{d}u
= \varepsilon^{-1}\widehat{\psi}\big(\varepsilon^{-1}(1-\|s\|_{\ell^p}^p)\big). \]
It was shown in Lemma~4.1 of \cite{CMP15:roth} that
\[ \int_{\mathbb{R}^d} \omega^{d,p,\varepsilon}(s) \,\textup{d}s \sim_{d,p} 1 \]
for all $0<\varepsilon<1/10d$. Thus, for such $\varepsilon$ we set
\begin{equation}\label{eq:omegaquotient}
c(d,p,\varepsilon) := \frac{\int_{\mathbb{R}^d} \omega^{d,p,\varepsilon}(s) \,\textup{d}s}{\int_{\mathbb{R}^d} \omega^{d,p,1}(s) \,\textup{d}s} \sim_{d,p} 1
\end{equation}
and then $k^{d,p,\varepsilon}\colon\mathbb{R}^d\to\mathbb{R}$ defined by
\begin{equation}\label{eq:defofk}
k^{d,p,\varepsilon}(s) := \omega^{d,p,\varepsilon}(s) - c(d,p,\varepsilon) \omega^{d,p,1}(s)
\end{equation}
has integral equal to $0$.

We introduce the number $D=d_1+\cdots+d_n$, so that
\[ \mathbb{R}^{D}\cong\mathbb{R}^{d_1}\times\dots\times\mathbb{R}^{d_n}. \]
Throughout the proofs we will use the shorthand notation
\[ \mathbf{x} := (x_1, \dots, x_n),\quad \mathbf{y} := (y_1, \dots, y_n),\quad \mathbf{s} := (s_1, \dots, s_n) \]
and we view $\mathbf{x}$, $\mathbf{y}$, and $\mathbf{s}$ as vectors from $\mathbb{R}^{D}$.
It will also be convenient to adopt some derived notation, such as
\[ \textup{d}\mathbf{x} := \textup{d}x_1 \cdots \textup{d}x_n, \quad
\textup{d}\sigma^{p}_{\lambda\mathbf{a}}(\mathbf{s}) := \textup{d}\sigma^{d_1,p}_{\lambda a_1}(s_1) \cdots \textup{d}\sigma^{d_n,p}_{\lambda a_n}(s_n). \]
In the same spirit we define
\[ \omega^{p,\varepsilon}_{\lambda\mathbf{a}}(\mathbf{s}) := \omega^{d_1,p,\varepsilon}_{\lambda a_1}(s_1) \cdots \omega^{d_n,p,\varepsilon}_{\lambda a_n}(s_n). \]
Let us also write $\mathbf{k}=(k_1,\ldots,k_n)\in\{0,1\}^n$ and denote
\[ (\mathcal{F}f)(\mathbf{x},\mathbf{s}) := \prod_{\mathbf{k}\in\{0,1\}^n} f(x_1 + k_1 s_1,\,\dots,\,x_n + k_n s_n) \]
for a function $f\colon\mathbb{R}^D\to[0,1]$ and
\begin{align*}
(\widetilde{\mathcal{F}}f)(\mathbf{x},\mathbf{y},\mathbf{s}) :=
& \Big( \prod_{\mathbf{k}\in\{0,1\}^n} f(x_1 + k_1 s_1,\,\dots,\,x_n + k_n s_n,\,y_1, \dots, y_n) \Big) \\
& f(x_1, \dots, x_n,\,y_1 + s_1, \dots, y_n) \cdots f(x_1, \dots, x_n,\,y_1, \dots, y_n + s_n)
\end{align*}
for a function $f\colon(\mathbb{R}^{D})^2\to[0,1]$.

The most important objects are the \emph{pattern-counting forms}, defined as follows. For a ``scale'' $\lambda>0$ we set
\[ \mathcal{N}^{p}_{\lambda}(f) := \int_{(\mathbb{R}^{D})^2} (\mathcal{F}f)(\mathbf{x},\mathbf{s}) \,\textup{d}\sigma^{p}_{\lambda\mathbf{a}}(\mathbf{s}) \,\textup{d}\mathbf{x} \]
and
\[ \widetilde{\mathcal{N}}^{p}_{\lambda}(f) := \int_{(\mathbb{R}^{D})^3} (\widetilde{\mathcal{F}}f)(\mathbf{x},\mathbf{y},\mathbf{s}) \,\textup{d}\sigma^{p}_{\lambda\mathbf{a}}(\mathbf{s}) \,\textup{d}\mathbf{x} \,\textup{d}\mathbf{y}. \]
The name comes from the fact that if $\mathcal{N}^{p}_{\lambda}(\mathbbm{1}_A)>0$ (resp.\@ $\widetilde{\mathcal{N}}^{p}_{\lambda}(\mathbbm{1}_A)>0$), then $A$ contains a box \eqref{eq:box1} (resp.\@ a corner-extended box \eqref{eq:box3}$\cup$\eqref{eq:box4}) with $\|s_j\|_{\ell^p} = \lambda a_j$, $j=1,2,\ldots,n$.
We will also need their smoothened versions, defined for $\varepsilon>0$ as
\[ \mathcal{M}^{p,\varepsilon}_{\lambda}(f) := \int_{(\mathbb{R}^{D})^2} (\mathcal{F}f)(\mathbf{x},\mathbf{s}) \omega^{p,\varepsilon}_{\lambda\mathbf{a}}(\mathbf{s}) \,\textup{d}\mathbf{s} \,\textup{d}\mathbf{x} \]
and
\[ \widetilde{\mathcal{M}}^{p,\varepsilon}_{\lambda}(f) := \int_{(\mathbb{R}^{D})^3} (\widetilde{\mathcal{F}}f)(\mathbf{x},\mathbf{y},\mathbf{s}) \omega^{p,\varepsilon}_{\lambda\mathbf{a}}(\mathbf{s}) \,\textup{d}\mathbf{s} \,\textup{d}\mathbf{x} \,\textup{d}\mathbf{y}. \]
By the standard approximation of identity arguments,
\begin{equation}\label{eq:limits}
\lim_{\varepsilon\to0^+} \mathcal{M}^{p,\varepsilon}_{\lambda}(f) = \mathcal{N}^{p}_{\lambda}(f), \quad
\lim_{\varepsilon\to0^+} \widetilde{\mathcal{M}}^{p,\varepsilon}_{\lambda}(f) = \widetilde{\mathcal{N}}^{p}_{\lambda}(f)
\end{equation}
for functions $f$ as above.
Finally, we denote
\[ \mathcal{E}^{p,\varepsilon}_{\lambda}(f) := \mathcal{M}^{p,\varepsilon}_{\lambda}(f) - b(p,\varepsilon) \mathcal{M}^{p,1}_{\lambda}(f) \]
and
\[ \widetilde{\mathcal{E}}^{p,\varepsilon}_{\lambda}(f) := \widetilde{\mathcal{M}}^{p,\varepsilon}_{\lambda}(f) - b(p,\varepsilon) \widetilde{\mathcal{M}}^{p,1}_{\lambda}(f), \]
where we recall that the numbers $c(d_j,p,\varepsilon)$ come from \eqref{eq:omegaquotient} and use the shorthand notation
\[ b(p,\varepsilon) := c(d_1,p,\varepsilon) \cdots c(d_n,p,\varepsilon). \]

Here are the three main propositions needed in the proofs.

\begin{proposition}\label{prop:1mainterm}
Suppose that $1<p<\infty$ and that $\delta,\lambda,N$ are real numbers such that $0<\delta\leq 1$ and $0<\lambda\leq N$.
\begin{itemize}
\item[(a)] If $f\colon\mathbb{R}^{D}\to[0,1]$ is a measurable function supported in $[0,N]^D$ and satisfying\linebreak $\int_{[0,N]^D} f \geq \delta N^D$, then
\[ \mathcal{M}^{p,1}_{\lambda}(f) \gtrsim_{D,\delta} N^D. \]
\item[(b)] If $f\colon\mathbb{R}^{2D}\to[0,1]$ is a measurable function supported in $[0,N]^{2D}$ and satisfying $\int_{[0,N]^{2D}} f \geq \delta N^{2D}$, then
\[ \widetilde{\mathcal{M}}^{p,1}_{\lambda}(f) \gtrsim_{D,\delta} N^{2D}. \]
\end{itemize}
\end{proposition}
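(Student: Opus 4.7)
The plan is to exploit positivity of the smoothing weight and then appeal to tiling plus iterated Cauchy--Schwarz, following the main-term estimate in \cite{CMP15:roth}. Since $\widehat{\psi}\geq 0$ and $\widehat{\psi}(1)>0$, continuity gives some $r_0>0$ with $\omega^{d_j,p,1}(s)\geq c>0$ for $\|s\|_{\ell^p}\leq r_0$, and dilation yields
\[
\omega^{d_j,p,1}_{\lambda a_j}(s)\;\gtrsim\;(\lambda a_j)^{-d_j}\,\mathbbm{1}_{\{\|s\|_{\ell^p}\leq r_0\lambda a_j\}}(s).
\]
Substituting into the definitions reduces Part~(a) to proving the ``local box-count'' lower bound
\[
\int_{\mathbb{R}^D}\int_{\prod_{j}\{\|s_j\|_{\ell^p}\leq r_0\lambda a_j\}}(\mathcal{F}f)(\mathbf{x},\mathbf{s})\,d\mathbf{s}\,d\mathbf{x}\;\gtrsim_{D,\delta}\;\lambda^D N^D,
\]
and reduces Part~(b) analogously; the smoothing weight is now (a multiple of) an indicator, which is much easier to manipulate.

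For the local bound in Part~(a), I would tile $[0,N]^D$ into axis-parallel cubes $Q_I=\prod_{j=1}^{n} Q^j_{i_j}$ of common side length $c\lambda$, with $c>0$ small enough that each difference set $Q^j_{i_j}-Q^j_{i_j}$ lies inside the corresponding $\ell^p$-ball of radius $r_0\lambda a_j$; there are $M\sim(N/\lambda)^D$ such cubes. Restricting to configurations whose entire box $\{\mathbf{x}+\mathbf{k}\cdot\mathbf{s}:\mathbf{k}\in\{0,1\}^n\}$ lies inside a single $Q_I$ and reparametrising via $(u^0_j,u^1_j)=(x_j,x_j+s_j)$ gives a sum $\sum_I J_I$, where
\[
J_I=\int_{\prod_{j}(Q^j_{i_j})^2}\prod_{\mathbf{k}\in\{0,1\}^n}f\bigl(u^{k_1}_1,\ldots,u^{k_n}_n\bigr)\,d\mathbf{u}^0\,d\mathbf{u}^1.
\]
A Gowers-type iterated Cauchy--Schwarz, one coordinate block $j$ at a time, produces $J_I\geq(\int_{Q_I}f)^{2^n}/|Q_I|^{2^n-2}$. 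Writing $\int_{Q_I}f=\alpha_I|Q_I|\in[0,|Q_I|]$, the hypothesis $\int f\geq\delta N^D$ forces $\sum_I\alpha_I\geq\delta M$, and the power-mean inequality then gives $\sum_I\alpha_I^{2^n}\geq\delta^{2^n}M$, so that
\[
\sum_I J_I\;\geq\;|Q_I|^2\sum_I\alpha_I^{2^n}\;\gtrsim\;\delta^{2^n}\,\lambda^D N^D,
\]
which is exactly the required bound.

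Part~(b) follows the same template on $[0,N]^D_{\mathbf{x}}\times[0,N]^D_{\mathbf{y}}$: tile each factor into $\lambda$-cubes and restrict the entire corner-extended box (all $2^n+n$ vertices) to a single product cube $Q^x_I\times Q^y_J$. The resulting hybrid cube-local pattern combines the $2^n$ box vertices living in the $\mathbf{x}$-block with $n$ corner-completing vertices that tie $\mathbf{x}$ and $\mathbf{y}$ together through the shared gaps $s_j$; its lower bound is produced by an iterated Cauchy--Schwarz that first peels off the $\mathbf{x}$-variables with the box-norm step from Part~(a) and then handles the $\mathbf{y}$-variables with a corner-type Cauchy--Schwarz in each coordinate block. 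Summation over $(I,J)$ and the mass hypothesis $\int f\geq\delta N^{2D}$ finish the argument. The main obstacle is precisely this cube-local Cauchy--Schwarz for Part~(b): the corner-extended box is not a clean Gowers box norm because the corner-completing vertices couple the two blocks through the same $s_j$'s, so the order and pairing of the Cauchy--Schwarz inequalities have to be arranged with some care to yield a lower bound of the required power-law form. Everything else (positivity reduction, tiling, and final power-mean summation) is essentially routine.
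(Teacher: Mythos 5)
Your part (a) is correct and in fact takes a genuinely more elementary route than the paper does for that half. The reduction via $\widehat{\psi}(1)>0$ to an indicator weight and the tiling into cubes of side $\sim\lambda$ match the scheme of Proposition~2.1 in \cite{CMP15:roth}, but on each cube the paper invokes Lemma~\ref{lm:1combroth}, whose proof goes through the Furstenberg--Katznelson multidimensional Szemer\'{e}di theorem \cite{FK78:msz} together with the Varnavides averaging trick \cite{Var59:dens}. For the pure box pattern this is overkill: as you observe, after the substitution $(u_j^0,u_j^1)=(x_j,x_j+s_j)$ the cube-local count is exactly the $2^n$-th power of a Gowers box norm, and iterated Cauchy--Schwarz gives $J_I\geq\bigl(\int_{Q_I}f\bigr)^{2^n}|Q_I|^{-(2^n-2)}$, so your argument even yields the explicit constant $\delta^{2^n}$ where the paper's is ineffective.

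Part (b), however, has a fatal gap. For $n=1$ the corner-extended box is precisely the corner $(x,y),(x+s,y),(x,y+s)$, and there is no Cauchy--Schwarz lower bound of power-law type for the corner count of a set of density $\delta$: discretized Behrend-type corner-free constructions rule out any bound of the form $\gtrsim\delta^{C}N^{3}$ with fixed $C$. The structural reason is that Cauchy--Schwarz produces lower bounds only for counts of patterns carrying the full product (box-norm) structure; the corner-completing vertex reuses the gap $s_j$ in the $\mathbf{y}$-block without the matching fourth vertex $(x+s,y+s)$, so no ordering or pairing of Cauchy--Schwarz applications closes the cube-local count into a complete box norm --- the step you flag as needing ``some care'' cannot be carried out at all. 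This is exactly why the paper (following \cite{CMP15:roth} and \cite{DKR17:corner}) proves Lemma~\ref{lm:1combroth}(b) via multidimensional Szemer\'{e}di plus Varnavides, and why the resulting dependence on $\delta$ is necessarily ineffective rather than power-law. To repair your proof of part (b), replace the claimed ``corner-type Cauchy--Schwarz'' with that combinatorial input; the surrounding positivity reduction and tiling are fine.
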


\begin{proposition}\label{prop:2difference}
Suppose that $\varepsilon,\lambda,N$ are real numbers such that $0<\varepsilon<1$ and $0<\lambda\leq N$.
\begin{itemize}
\item[(a)] If $d_j\geq 5$ for $j=1,2,\ldots,n$ and if $f\colon\mathbb{R}^{D}\to[0,1]$ is a measurable function supported in $[0,N]^D$, then
\[ \big| \mathcal{N}^{2}_{\lambda}(f) - \mathcal{M}^{2,\varepsilon}_{\lambda}(f) \big| \lesssim_D \varepsilon^{1/4} N^D. \]
\item[(b)] Additionally, take $1<p<\infty$, $p\neq 2$. If each $d_j$ is sufficiently large for $j=1,2,\ldots,n$ and if $f\colon\mathbb{R}^{2D}\to[0,1]$ is a measurable function supported in $[0,N]^{2D}$, then
\[ \big| \widetilde{\mathcal{N}}^{p}_{\lambda}(f) - \widetilde{\mathcal{M}}^{p,\varepsilon}_{\lambda}(f) \big| \lesssim_D \varepsilon N^{2D}. \]
\end{itemize}
\end{proposition}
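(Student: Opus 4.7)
The plan is to telescope the multi-direction difference of measures $\sigma^{p}_{\lambda\mathbf{a}}$ and $\omega^{p,\varepsilon}_{\lambda\mathbf{a}}\,d\mathbf{s}$ along their tensor-product structure, and then to estimate each resulting one-direction term via a combination of box-adapted Cauchy-Schwarz, Plancherel in a single slot, and the quantitative Fourier decay of $\ell^p$-spherical-type surface measures. Concretely, writing
\[ \mu_{j,\varepsilon}:=\sigma^{d_j,p}_{\lambda a_j}-\omega^{d_j,p,\varepsilon}_{\lambda a_j}\,ds_j, \]
one has the telescoping identity
\[ \sigma^{p}_{\lambda\mathbf{a}}-\omega^{p,\varepsilon}_{\lambda\mathbf{a}}\,d\mathbf{s} \;=\; \sum_{j=1}^{n}\Big(\bigotimes_{i<j}\sigma^{d_i,p}_{\lambda a_i}\Big)\otimes\mu_{j,\varepsilon}\otimes\Big(\bigotimes_{i>j}\omega^{d_i,p,\varepsilon}_{\lambda a_i}\,ds_i\Big), \]
which presents $\mathcal{N}^{p}_{\lambda}(f)-\mathcal{M}^{p,\varepsilon}_{\lambda}(f)$ as a sum of $n$ multilinear forms $T_j(f)$, in each of which only the $s_j$-slot carries the rough difference $\mu_{j,\varepsilon}$.

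For each $j$, I would exploit the factorization
\[ (\mathcal{F}f)(\mathbf{x},\mathbf{s}) \;=\; \Big(\prod_{\mathbf{k}:\,k_j=0}f(\mathbf{x}+\mathbf{k}\cdot\mathbf{s})\Big)\Big(\prod_{\mathbf{k}:\,k_j=1}f(\mathbf{x}+\mathbf{k}\cdot\mathbf{s})\Big), \]
in which only the second factor depends on $s_j$, and apply Cauchy-Schwarz in $x_j$; the first factor and the non-$j$ integrations are controlled by $\|f\|_{\infty}\le1$ together with the $[0,N]^D$-support, producing the correct $N^D$-power. After Cauchy-Schwarz the surviving object is a quadratic form in $\mu_{j,\varepsilon}$, which Plancherel in $x_j$ converts into a frequency-side expression dominated by $\sup_{\xi\in\mathbb{R}^{d_j}}|\widehat{\mu_{j,\varepsilon}}(\xi)|$ to a power depending on how many Cauchy-Schwarz steps have been used. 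In the corner-extended case (b), one first applies Cauchy-Schwarz in the $\mathbf{y}$-variables to fold away the $n$ additional corner-completing factors appearing in $\widetilde{\mathcal{F}}f$, reducing to a form of the same shape as in (a), after which the telescoping-plus-Plancherel scheme proceeds identically.

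The crucial analytic input is then the quantitative Fourier decay of $\mu_{j,\varepsilon}$. For $p=2$, the measure $\sigma^{d_j,2}_{\lambda a_j}$ is (up to a constant) surface measure on the Euclidean sphere of radius $\lambda a_j$, whose Fourier transform is a Bessel function decaying like $(1+\lambda|\xi|)^{-(d_j-1)/2}$, while $\omega^{d_j,2,\varepsilon}_{\lambda a_j}$ is a radial mollification of $\sigma^{d_j,2}_{\lambda a_j}$ at scale $\varepsilon$ in the $\|\cdot\|_{\ell^2}^2$ variable. Consequently $|\widehat{\mu_{j,\varepsilon}}(\xi)|$ obeys a small-$\xi$ bound of order $\varepsilon|\lambda\xi|$ coming from the vanishing of $1-\psi(\varepsilon u)$ to high order near $u=0$ (equivalently, from the smoothness of the mollification), and the Bessel decay for large $\lambda|\xi|$; optimizing at the crossover and taking into account that $d_j\ge 5$ gives $(d_j-1)/2\ge 2$, one obtains $\sup_{\xi}|\widehat{\mu_{j,\varepsilon}}(\xi)|\lesssim\varepsilon^{\theta}$ with $\theta$ large enough that after the square root from Cauchy-Schwarz the final bound is $\varepsilon^{1/4}N^D$. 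For $p\ne 2$, the $\ell^p$-sphere has uniformly non-vanishing Gaussian curvature (and, more importantly, has non-degenerate phase in every coordinate direction), so the stationary-phase analysis of Cook-Magyar-Pramanik gives a sharper Fourier-decay estimate for $\mu_{j,\varepsilon}$ which, for $d_j$ sufficiently large compared to $p$, yields the linear bound $\varepsilon N^{2D}$.

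The principal obstacle I expect is twofold: first, arranging the sequence of Cauchy-Schwarz steps and the telescoping so that after all simplifications the remaining Fourier-decay estimate is needed only in a single coordinate direction at a time, without losing the support properties that are essential for obtaining the correct $N$-powers; and second, in part (b), verifying that the asymmetric factors $f(\mathbf{x},\mathbf{y}+s_j\mathbf{e}_j)$ appearing in $\widetilde{\mathcal{F}}f$ (which depend on $\mathbf{y}$ differently from the box factors) can be absorbed by the preliminary Cauchy-Schwarz in $\mathbf{y}$ without destroying the factorization used in the subsequent box-style argument. This last point is essentially the technical heart of the Cook-Magyar-Pramanik and Durcik-Kova\v{c}-Rimani\'c arguments transplanted to the $n$-fold tensor setting of the present paper, and the hypothesis that $d_j$ is sufficiently large provides precisely the room needed for the Fourier-decay estimates to compensate for the resulting combinatorial losses.
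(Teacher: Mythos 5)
Your treatment of part (a) is essentially sound and takes a mildly different route from the paper: the paper first replaces $\mathcal{N}^{2}_{\lambda}$ by $\lim_{\eta\to0^+}\mathcal{M}^{2,\eta}_{\lambda}$, telescopes the difference of the two mollified cutoffs exactly as you do, and then controls each one-direction piece not by $\sup_\xi|\widehat{\mu_{j,\varepsilon}}(\xi)|$ but by the Gowers norm $\|\omega^{d_j,2,\eta}-\omega^{d_j,2,\varepsilon}\|_{\textup{U}^2}$ via a generalized von Neumann inequality (Lemma~\ref{lm:2combcs}(a)), with the $\textup{U}^2$ bound $\lesssim\varepsilon^{1/4}$ proved by tensoring one-dimensional estimates for $\varphi(s)e^{2\pi ius^2}$ (Lemma~\ref{lm:3combgowers}(a)); this is where $d_j\geq5$ enters. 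Your Plancherel route, bounding the bilinear form by $N^{d_1}\sup_\xi|\widehat{\mu_{1,\varepsilon}}(\xi)|$ and using Bessel decay plus the order-$\varepsilon|u|$ vanishing of $1-\psi(\varepsilon u)$, is a legitimate alternative for the box pattern (it is closer in spirit to Lyall--Magyar) and, if carried out, would in fact need less than $d_j\geq5$.

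For part (b), however, there is a genuine gap. You claim that a preliminary Cauchy--Schwarz in $\mathbf{y}$ folds away the corner-completing factors and reduces (b) to ``a form of the same shape as in (a),'' after which ``the telescoping-plus-Plancherel scheme proceeds identically,'' with the only input being a pointwise Fourier-decay estimate for $\mu_{j,\varepsilon}$. This cannot be right: a bound of the form $\sup_\xi|\widehat{\mu_{j,\varepsilon}}(\xi)|\lesssim\varepsilon^{\theta}$ holds for $p=2$ just as well as for $p\neq2$, so if it sufficed, Theorem~\ref{thm:main3} (and hence Theorem~\ref{thm:main2}) would hold in the Euclidean norm, contradicting Bourgain's counterexample. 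The corner factor $f(\mathbf{x},\mathbf{y}+s_1e_1)$ entangles $s_1$ with both the $x_1$- and $y_1$-slots, and after the necessary Cauchy--Schwarz steps the one-direction difference $g=\omega^{d_1,p,\eta}_{a_1}-\omega^{d_1,p,\varepsilon}_{a_1}$ appears to the eighth power: the correct von Neumann inequality (Lemma~\ref{lm:2combcs}(b)) produces $\|g\|_{\textup{U}^3(\mathbb{R}^{d_1})}$, not $\sup|\widehat{g}|$ (which is essentially a $\textup{U}^2$-type quantity). The smallness of this $\textup{U}^3$ norm is exactly where $p\neq2$ enters: for $p=2$ the phase $u\|s\|_{\ell^2}^2$ is quadratic, the $\textup{U}^3$ norm does not annihilate quadratic phases, and $\|\varphi(s)e^{2\pi ius^2}\|_{\textup{U}^3(\mathbb{R})}$ does not decay in $u$, whereas for $p\neq2$ it does (this is Lemma~2.4 of Cook--Magyar--Pramanik, quoted as Lemma~\ref{lm:3combgowers}(b)); the ``sufficiently large $d_j$'' is needed to sum the resulting powers of $|u|^{-1}$. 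Your appeal to curvature/stationary phase as a sharper \emph{Fourier-decay} estimate for $\mu_{j,\varepsilon}$ therefore identifies the wrong quantity; the missing idea is the degree-three uniformity-norm control of the difference of cutoffs and the accompanying multi-step Cauchy--Schwarz that produces it.
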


\begin{proposition}\label{prop:3multiscale}
Suppose that $1<p<\infty$, $0<\varepsilon<1/10D$, and that $\lambda_1,\lambda_2,\ldots,\lambda_M$ are positive numbers such that $\lambda_{m+1}/\lambda_{m}\geq 2$ for $m=1,2,\ldots,M-1$.
\begin{itemize}
\item[(a)] If $f\colon\mathbb{R}^{D}\to[0,1]$ is a measurable function supported in $[0,N]^D$, then
\[ \sum_{m=1}^{M} |\mathcal{E}^{p,\varepsilon}_{\lambda_m}(f)| \lesssim_{D,\varepsilon} N^D. \]
\item[(b)] If $f\colon\mathbb{R}^{2D}\to[0,1]$ is a measurable function supported in $[0,N]^{2D}$, then
\[ \Big( \sum_{m=1}^{M} |\widetilde{\mathcal{E}}^{p,\varepsilon}_{\lambda_m}(f)|^2 \Big)^{1/2} \lesssim_{D,\varepsilon} N^{2D}. \]
\end{itemize}
\end{proposition}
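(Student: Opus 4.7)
The strategy, following the template of \cite{CMP15:roth}, is to reduce both bounds to estimates for multilinear singular integrals attached to the box and corner-extended-box patterns, and then invoke the main result of \cite{DT18}.

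\textbf{Step 1 (telescoping).} The defining feature of the kernels in \eqref{eq:defofk} is that $\int_{\mathbb{R}^{d_j}} k^{d_j,p,\varepsilon}=0$. Setting $A_j(s_j) := \omega^{d_j,p,\varepsilon}_{\lambda a_j}(s_j)$ and $B_j(s_j) := c(d_j,p,\varepsilon)\,\omega^{d_j,p,1}_{\lambda a_j}(s_j)$, so that $A_j-B_j$ is a dilate of $k^{d_j,p,\varepsilon}$, the elementary product identity
\[ \prod_{j=1}^{n} A_j - \prod_{j=1}^{n} B_j \;=\; \sum_{i=1}^{n} \Big(\prod_{j<i} A_j\Big)(A_i-B_i)\Big(\prod_{j>i} B_j\Big) \]
splits $\mathcal{E}^{p,\varepsilon}_{\lambda}(f)=\sum_{i=1}^{n}\mathcal{E}^{p,\varepsilon,i}_{\lambda}(f)$ and analogously $\widetilde{\mathcal{E}}^{p,\varepsilon}_{\lambda}(f)=\sum_{i=1}^{n}\widetilde{\mathcal{E}}^{p,\varepsilon,i}_{\lambda}(f)$. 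In the $i$-th summand exactly one coordinate, namely $s_i$, carries a dilated bump of mean zero, while the remaining coordinates carry $\textup{L}^1$-normalized Schwartz-type bumps at the common scale $\lambda$. By symmetry and the triangle inequality it suffices to handle a single summand, say $i=1$.

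\textbf{Step 2 (scale collapse via a sign trick).} For part (a), choose signs $\epsilon_m\in\{-1,+1\}$ with $|\mathcal{E}^{p,\varepsilon,1}_{\lambda_m}(f)|=\epsilon_m\,\mathcal{E}^{p,\varepsilon,1}_{\lambda_m}(f)$ and interchange the finite scale sum with the integral to obtain
\[ \sum_{m=1}^{M} |\mathcal{E}^{p,\varepsilon,1}_{\lambda_m}(f)| \;=\; \int_{(\mathbb{R}^{D})^2} (\mathcal{F}f)(\mathbf{x},\mathbf{s})\,\Phi(s_1,\mathbf{s}')\,\textup{d}\mathbf{s}\,\textup{d}\mathbf{x}, \]
where $\mathbf{s}':=(s_2,\ldots,s_n)$ and $\Phi$ is a weighted sum over $m$ of tensor products. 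For every fixed $\mathbf{s}'$, the function $s_1\mapsto\Phi(s_1,\mathbf{s}')$ is a lacunary sum of $\textup{L}^1$-normalized, mean-zero Schwartz bumps on $\mathbb{R}^{d_1}$ at scales $\lambda_m a_1$ with bounded coefficients; the lacunarity $\lambda_{m+1}/\lambda_m\geq 2$ yields the standard pointwise decay and H\"older regularity, so that $\Phi(\cdot,\mathbf{s}')$ is a Calder\'on--Zygmund kernel on $\mathbb{R}^{d_1}$ with constants uniform in $\mathbf{s}'$ and in the signs. For part (b) I would first apply Khintchine's inequality to dominate $\big(\sum_m|\widetilde{\mathcal{E}}^{p,\varepsilon,1}_{\lambda_m}(f)|^2\big)^{1/2}$ by the expectation of $\big|\sum_m r_m \widetilde{\mathcal{E}}^{p,\varepsilon,1}_{\lambda_m}(f)\big|$ over Rademacher signs $r_m$, reducing to the same type of single multilinear form.

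\textbf{Step 3 (multilinear singular integral bound via \cite{DT18}).} The right-hand side is now a multilinear form attached to the box pattern \eqref{eq:box1} (respectively the corner-extended box pattern \eqref{eq:box3}$\cup$\eqref{eq:box4}), whose kernel is a Calder\'on--Zygmund kernel in the single ``cancellation'' variable $s_1$ tensored with fixed Schwartz bumps in $s_2,\ldots,s_n$. This is precisely the setup of entangled multilinear singular integrals handled in \cite{DT18}, whose main theorem supplies a bound of the shape
\[ \Big|\int (\mathcal{F}f)(\mathbf{x},\mathbf{s})\,\Phi(s_1,\mathbf{s}')\,\textup{d}\mathbf{s}\,\textup{d}\mathbf{x}\Big| \;\lesssim_{D,\varepsilon}\; \|f\|_{\textup{L}^{2^n}(\mathbb{R}^{D})}^{2^n}, \]
and an analogous bound for the corner-extended form with $2^n+n$ copies of $f$. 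Since $f$ is $[0,1]$-valued and supported in $[0,N]^D$ (respectively $[0,N]^{2D}$), the right-hand sides are at most $N^D$ (respectively $N^{2D}$), delivering both parts. The hard part of the proof is Step~3: verifying that the sign-collapsed form really fits the hypotheses of \cite{DT18}, with Calder\'on--Zygmund constants in the $s_1$ direction depending only on the decay, mean-zero property, and moderate smoothness of $k^{d_1,p,\varepsilon}$, and that the Schwartz bumps in $s_2,\ldots,s_n$ can be absorbed without breaking the cube/corner combinatorial structure. The $\varepsilon$-dependence enters only through Schwartz seminorms of $k^{d_1,p,\varepsilon}$, which are finite for each fixed $\varepsilon$.
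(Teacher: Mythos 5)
Your treatment of part (a) matches the paper's: telescope the product kernel so that exactly one factor is mean-zero, attach signs $\alpha_m$, check that the lacunary signed sum $K(\mathbf{s})=\sum_m\alpha_m\varphi^{(1)}_{\lambda_m}(s_1)\cdots\varphi^{(n)}_{\lambda_m}(s_n)$ satisfies the symbol estimates \eqref{eq:symbolest} uniformly in $M$ and in the signs, and apply the box-type singular Brascamp--Lieb bound (part (a) of Theorem~\ref{thm:singular}, which comes from \cite{DT18}) with $F_{\mathbf{k}}=f$. One small correction there: the relevant hypothesis is a joint symbol estimate for $\widehat{K}$ on all of $\mathbb{R}^{D}$, not a Calder\'on--Zygmund bound in $s_1$ uniformly in $(s_2,\ldots,s_n)$; the bumps in $s_2,\ldots,s_n$ live at the scale $\lambda_m$ and cannot be pulled out of the sum over $m$ as ``fixed'' bumps.

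Part (b) has a genuine gap. Khintchine reduces $\bigl(\sum_m|\widetilde{\mathcal{E}}^{p,\varepsilon,1}_{\lambda_m}(f)|^2\bigr)^{1/2}$ to a uniform bound on the signed sums $\bigl|\sum_m r_m\widetilde{\mathcal{E}}^{p,\varepsilon,1}_{\lambda_m}(f)\bigr|$, i.e.\@ to a direct singular-integral estimate for the corner-extended-box form with its $2^n+n$ functions. No such estimate is available: this form is not of the cubical type treated in \cite{DT18}, and already for $n=1$ it is essentially a bound for a truncated triangular Hilbert transform, a well-known open problem. (Had such a bound existed, one could choose the signs to get the stronger $\ell^1$ bound $\sum_m|\widetilde{\mathcal{E}}_{\lambda_m}(f)|\lesssim N^{2D}$; the $\ell^2$ exponent in the statement is precisely the footprint of the step you are missing.) The paper instead applies the Cauchy--Schwarz inequality to each $\widetilde{\mathcal{E}}^{p,\varepsilon,1}_{\lambda_m}(f)$ separately, in all variables except the cancellative one, which doubles $s_1$ into a pair $(s_1,s_1')$ and yields $|\widetilde{\mathcal{E}}^{p,\varepsilon,1}_{\lambda_m}(f)|^2\leq\mathcal{A}_{\lambda_m}(f)\,\mathcal{B}_{\lambda_m}(f)$. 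The factor $\mathcal{B}_{\lambda_m}(f)$ is trivially $\lesssim_{D,\varepsilon}N^{2D}$, while $\sum_m\mathcal{A}_{\lambda_m}(f)$ is a single multilinear form with $2^{n+1}$ functions whose kernel $\sum_m\varphi^{(1)}_{\lambda_m}(s_1)\varphi^{(1)}_{\lambda_m}(s_1')\varphi^{(2)}_{\lambda_m}(s_2)\cdots\varphi^{(n)}_{\lambda_m}(s_n)$ satisfies the symbol estimates (no signs are needed after the doubling); this is exactly the setting of part (b) of Theorem~\ref{thm:singular}, generalizing Theorem~1.3 of \cite{DKR17:corner}. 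Summing $\mathcal{A}_{\lambda_m}(f)\mathcal{B}_{\lambda_m}(f)$ over $m$ then gives $\sum_m|\widetilde{\mathcal{E}}_{\lambda_m}(f)|^2\lesssim_{D,\varepsilon}N^{4D}$, which is the claimed square-function bound. To repair your argument you must insert this Cauchy--Schwarz doubling before invoking any singular-integral estimate.
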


Proofs of Propositions~\ref{prop:1mainterm}--\ref{prop:3multiscale} are postponed to the later sections. Now we show how they imply Theorems~\ref{thm:main1} and \ref{thm:main3}.

\begin{proof}[Proof of Theorem~\ref{thm:main1}]
We argue by contradiction and suppose that there is a set $A$ with strictly positive upper Banach density $\overline{\delta}(A)$ for which the claim does not hold: there exists a sequence $(\lambda_m)_{m=1}^{\infty}$ such that $\lim_{m\to\infty}\lambda_m=\infty$ and that, for each $m$, the set $A$ contains no boxes \eqref{eq:box1} with $\|s_j\|_{\ell^2} = \lambda_m a_j$, $j=1,2,\ldots,n$. By omitting some terms we can achieve $\lambda_{m+1}/\lambda_{m}\geq 2$ for each $m$. Fix an arbitrary positive integer $M$.
For $\delta:=\overline{\delta}(A)/2>0$, by the definition of the upper Banach density \eqref{eq:defofdensity}, we can find $N\geq\lambda_M$ and $\mathbf{x}\in\mathbb{R}^D$ such that
\[ \big| A\cap(\mathbf{x}+[0,N]^D) \big| \geq \delta N^D. \]
Denote $A':=(-\mathbf{x}+A)\cap[0,N]^D$ and $f=\mathbbm{1}_{A'}$, so that $A'$ is now a measurable subset of $[0,N]^D$ satisfying
\begin{equation}\label{eq:thmgrtdelta}
\int_{[0,N]^D}f(\mathbf{x})\,\textup{d}\mathbf{x} = |A'| \geq \delta N^D
\end{equation}
and it still has no boxes \eqref{eq:box1} of the previously described sizes determined by $\lambda$.
Consequently,
\begin{equation}\label{eq:thmaux1}
\mathcal{N}^{2}_{\lambda_m}(f)=0
\end{equation}
for $m=1,2,\ldots,M$. Because of condition \eqref{eq:thmgrtdelta}, we can apply part (a) of Proposition~\ref{prop:1mainterm} and get
\begin{equation}\label{eq:thmaux2}
\mathcal{M}^{2,1}_{\lambda_m}(f) \gtrsim_{D,\delta} N^D,
\end{equation}
again for each $m=1,2,\ldots,M$. Moreover, \eqref{eq:thmaux1}, part (a) of Proposition~\ref{prop:2difference} and \eqref{eq:thmaux2} together give
\begin{equation}\label{eq:thmaux3}
\mathcal{M}^{2,\varepsilon}_{\lambda_m}(f) = \big| \mathcal{N}^{2}_{\lambda_m}(f) - \mathcal{M}^{2,\varepsilon}_{\lambda_m}(f) \big|
\lesssim_D \varepsilon^{1/4} N^D \lesssim_{D,\delta} \varepsilon^{1/4} \mathcal{M}^{2,1}_{\lambda_m}(f).
\end{equation}
By \eqref{eq:thmaux3} and \eqref{eq:omegaquotient}, for a sufficiently small $\varepsilon$ depending on the dimensions and $\delta$, we have
\[ \mathcal{M}^{2,\varepsilon}_{\lambda_m}(f) \leq \frac{1}{2} b(2,\varepsilon) \mathcal{M}^{2,1}_{\lambda_m}(f), \]
so,
\[ |\mathcal{E}^{2,\varepsilon}_{\lambda_m}(f)| = b(2,\varepsilon) \mathcal{M}^{2,1}_{\lambda_m}(f) - \mathcal{M}^{2,\varepsilon}_{\lambda_m}(f)
\geq \frac{1}{2} b(2,\varepsilon) \mathcal{M}^{2,1}_{\lambda_m}(f). \]
By \eqref{eq:omegaquotient} and \eqref{eq:thmaux2} again, we conclude
\begin{equation}\label{eq:thmaux4}
|\mathcal{E}^{2,\varepsilon}_{\lambda_m}(f)| \gtrsim_{D,\delta} N^D
\end{equation}
for each $m=1,2,\ldots,M$. Summing the lower bound \eqref{eq:thmaux4} in $m$ gives
\begin{equation}\label{eq:thmaux5}
\sum_{m=1}^{M} |\mathcal{E}^{2,\varepsilon}_{\lambda_m}(f)| \gtrsim_{D,\delta} M N^D.
\end{equation}
Finally, combining \eqref{eq:thmaux5} with Proposition~\ref{prop:3multiscale} yields $M\lesssim_{D,\delta}1$, which contradicts the fact that $M$ could have been chosen arbitrarily large.
\end{proof}

\begin{proof}[Proof of Theorem~\ref{thm:main3}]
The same outline also applies here. The only difference is that part (b) of Proposition~\ref{prop:2difference} only holds for sufficiently large dimensions $d_j$ depending on $p$.
The reader can also consult the corresponding proofs of Theorem~2.2 in \cite{CMP15:roth} and Theorem~1.2 in \cite{DKR17:corner}.
\end{proof}


\section{Combinatorial results}
\label{sec:combinatorics}

The following lemma is needed in the proof of Proposition~\ref{prop:1mainterm} in the same way in which Bourgain's version of Roth's theorem for compact abelian groups \cite{B86:roth} is needed in the analogous proposition in \cite{CMP15:roth}.

\begin{lemma}\label{lm:1combroth}
Suppose $0<\delta\leq 1$.
\begin{itemize}
\item[(a)] If $f\colon\mathbb{R}^{D}\to[0,1]$ is a measurable function supported in $[0,N]^D$ and satisfying\linebreak $\int_{[0,N]^D} f \geq \delta N^D$, then
\[ \int_{([0,1]^{D})^2} (\mathcal{F}f)(\mathbf{x},\mathbf{s}) \,\textup{d}\mathbf{s} \,\textup{d}\mathbf{x} \gtrsim_{D,\delta} 1. \]
\item[(b)] If $f\colon\mathbb{R}^{2D}\to[0,1]$ is a measurable function supported in $[0,N]^{2D}$ and satisfying $\int_{[0,N]^{2D}} f \geq \delta N^{2D}$, then
\[ \int_{([0,1]^{D})^3} (\widetilde{\mathcal{F}}f)(\mathbf{x},\mathbf{y},\mathbf{s}) \,\textup{d}\mathbf{s} \,\textup{d}\mathbf{x} \,\textup{d}\mathbf{y} \gtrsim_{D,\delta} 1. \]
\end{itemize}
\end{lemma}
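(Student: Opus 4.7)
Both parts can be reduced, via a discretisation and pigeonhole argument, to a density result on a compact abelian group, where one can deploy translation-invariant combinatorial inequalities in a clean form.

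For part (a), the integrand $(\mathcal F f)(\mathbf x,\mathbf s)$ is the product of $f$ evaluated at the $2^n$ vertices of an axis-aligned box of corner $\mathbf x$ and $j$-th edge vector $s_j\in\mathbb R^{d_j}$. My plan is to lift $f$ to a function of mean value $\gtrsim_\delta 1$ on a sufficiently large torus $\mathbb T^D_M=(\mathbb R/M\mathbb Z)^{d_1}\times\cdots\times(\mathbb R/M\mathbb Z)^{d_n}$ (choosing $M$ so that the support of $f$ embeds without wraparound, e.g.\ $M>N$). On this torus, iterated Cauchy--Schwarz, applied once in each block $s_j$, yields the standard box-form inequality
\[
\int_{\mathbb T^D_M\times \mathbb T^D_M}(\mathcal F f)(\mathbf x,\mathbf s)\,\textup{d}\mathbf s\,\textup{d}\mathbf x \;\geq\; \Big(\int_{\mathbb T^D_M} f\Big)^{\!2^n}\gtrsim_{D,\delta}1.
\]
The desired bound on $([0,1]^D)^2$ then follows by Fubini and a pigeonhole in the $\mathbf x$-variable, localising to a unit subcube on which $f$ still carries a positive proportion of the mass; discarding the wraparound contribution, which lives in a thin boundary strip, costs only a constant factor.

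For part (b), the pattern $\widetilde{\mathcal F}f$ splits into a box in the $\mathbf x$-variable at the fixed $\mathbf y$-slice together with $n$ ``corner completions'' obtained by shifting one $y_j$-block by $s_j$ at a time. I would treat the box portion by the same Gowers--Cauchy--Schwarz iteration as in part (a), collapsing the $2^n$ box factors into a single non-negative weight $F(\mathbf x,\mathbf y)\in[0,1]$ of average $\gtrsim_\delta 1$. What remains is an integral of the form
\[
\int F(\mathbf x,\mathbf y)\prod_{j=1}^{n} f\bigl(\mathbf x,\,y_1,\ldots,y_j+s_j,\ldots,y_n\bigr)\,\textup{d}\mathbf s\,\textup{d}\mathbf x\,\textup{d}\mathbf y,
\]
which is precisely an $n$-dimensional corner configuration in $\mathbf y$ with common scale vector $\mathbf s$. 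I would conclude by invoking the multidimensional corners theorem of Furstenberg and Katznelson (in its quantitative hypergraph-removal form) on the compact-group lift; for $n=1$ this reduces to the Ajtai--Szemer\'edi corners theorem already used for the analogous step in \cite{DKR17:corner}.

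The main obstacle is part (b). The box and corner sub-patterns share the vector $\mathbf s$, so the Gowers--Cauchy--Schwarz manipulation on the $\mathbf x$-box must be performed without decoupling $\mathbf s$ from the surviving $n$-dimensional corners configuration, and one must verify that the residual weight $F$ still has average density large enough to feed into the corners theorem (which itself delivers only tower-type lower bounds). A routine but necessary secondary point, in both parts, is the torus-to-Euclidean transfer: one must check that restricting all variables to the unit cube and passing from periodic to genuine addition costs only a constant factor in the normalisation $\gtrsim_{D,\delta} 1$.
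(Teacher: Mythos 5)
Your part (a) takes a genuinely different route from the paper's: the paper deduces both parts in one stroke from the multidimensional Szemer\'edi theorem of Furstenberg and Katznelson applied to the relevant lattice pattern, followed by Varnavides' averaging, whereas your iterated Cauchy--Schwarz handles the pure box form directly and even yields a polynomial bound $\gtrsim\delta^{2^n}$. That part is essentially workable, but the order of operations must be reversed: on a torus of side $M>N$ the inequality $\int(\mathcal{F}f)\geq(\int f)^{2^n}$ gives no control on where the mass in $\mathbf{s}$ sits, and it may live entirely at scales comparable to $N$, so it does not localize to $\mathbf{s}\in[0,1]^D$ by pigeonholing afterwards. You should first pigeonhole to a unit subcube $Q$ on which $f$ has density $\gtrsim\delta$, then run the Cauchy--Schwarz argument with $Q$ wrapped into a unit torus, and finally dispose of the wraparound by the reflection symmetry $s\mapsto-s$ of the box pattern.

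The genuine gap is in part (b). Cauchy--Schwarz produces \emph{upper} bounds for counting forms in terms of box norms; it cannot ``collapse the $2^n$ box factors into a single non-negative weight $F$'' while preserving a \emph{lower} bound for the full form, because what must be bounded from below is the count of the composite pattern, not a product of counts of its sub-patterns. There is no Gowers--Cauchy--Schwarz manipulation that achieves such a factorization (already for $n=1$ this is why one cannot deduce the corner count from the edge count), and the residual configuration you would feed into the corners theorem is not a simultaneous corner anyway: the gaps $s_1,\dots,s_n$ are independent and the putative weight $F$ would still depend on $\mathbf{s}$, since the box and corner parts share these variables --- the very obstacle you flag but do not resolve. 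The paper sidesteps all of this: the corner-extended box, viewed in $\mathbb{Z}^{2n}$, is a single finite linear pattern with one dilation parameter $l$, so the Furstenberg--Katznelson theorem applies to the \emph{whole} configuration at once, and Varnavides' averaging converts the resulting existence statement into the required counting lower bound. Replacing your ``collapse, then corners theorem'' step by this direct application is the missing idea.
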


\begin{proof}[Proof of Lemma~\ref{lm:1combroth}]
Both parts of the lemma are shown using multidimensional Szemer\'{e}di's theorem of Furstenberg and Katznelson \cite{FK78:msz}.
By this result, for any dimension $n$ and any number $0<\beta\leq 1$ there exists a positive integer $m_{n,\beta}$ such that for each positive integer $m\geq m_{n,\beta}$ one has the following.
\begin{itemize}
\item Each subset $S\subseteq\{0,1,\ldots,m-1\}^n$ of cardinality at least $\beta m^n$ contains (vertices of) an $n$-dimensional cube,
\[ (i_1 + k_1 l,\,i_2 + k_2 l,\,\dots,\,i_n + k_n l), \quad k_1,k_2,\dots,k_n\in\{0,1\} \]
for some $i_1,i_2,\ldots,i_n,l\in\mathbb{Z}$ with $l\neq 0$.
\item Each subset $S\subseteq\{0,1,\ldots,m-1\}^{2n}$ of cardinality at least $\beta m^{2n}$ contains a $2n$-dimensional corner-extended cube,
\begin{align*}
(i_1 + k_1 l,\,i_2 + k_2 l,\,\dots,\,i_n + k_n l,\,j_1, j_2, \dots, j_n), \quad k_1,k_2,\dots,k_n\in\{0,1\}, \\
(i_1, i_2, \dots, i_n,\,j_1 + l, j_2, \dots, j_n),\quad \ldots,\quad (i_1, i_2, \dots, i_n,\,j_1, j_2, \dots, j_n + l)
\end{align*}
for some $i_1,i_2,\ldots,i_n,j_1,j_2,\ldots,j_n,l\in\mathbb{Z}$ with $l\neq 0$.
\end{itemize}
Then one applies the averaging trick of Varnavides \cite{Var59:dens}, in the same way it was done in the proof of Lemma~3.2 in \cite{DKR17:corner} for the particular case of the three-point corners.
\end{proof}

Indeed, multidimensional Szemer\'{e}di's theorem applies to any finite pattern on the integer lattice, not only to boxes and corner-extended boxes, so Lemma~\ref{lm:1combroth} can be generalized easily. The reasons why we restrict our attention to very special patterns lie in the rigidity of other auxiliary results, most notably Proposition~\ref{prop:3multiscale} above and Theorem~\ref{thm:singular} from Section~\ref{sec:analysis}.

\begin{proof}[Proof of Proposition~\ref{prop:1mainterm}]
The proposition is shown by cutting the Euclidean space into cubes, the scaled copies of $([0,1]^{D})^2$ or $([0,1]^{D})^3$, and applying Lemma~\ref{lm:1combroth} on each of them. For details the reader can consult the proof of Proposition~2.1 in \cite{CMP15:roth}.
\end{proof}

Now we turn to the proof of the second proposition. We will need the Euclidean version of the notion of the Gowers norms, so let us begin by setting
\[ (\Delta_{h}g)(s) := g(s) \overline{g(s+h)} \]
for $s,h\in\mathbb{R}^d$ and a function $g\colon\mathbb{R}^d\to\mathbb{C}$. If such $g$ is also measurable, then its \emph{Gowers uniformity norm of degree $k$} is defined as
\begin{align}
\|g\|_{\textup{U}^k(\mathbb{R}^d)} & := \Big( \int_{(\mathbb{R}^d)^{k+1}} (\Delta_{h_k} \cdots \Delta_{h_1} g)(s) \,\textup{d}s \,\textup{d}h_1 \cdots \,\textup{d}h_k \Big)^{2^{-k}} \nonumber \\
& = \Big( \int_{(\mathbb{R}^d)^{k-1}} \Big| \int_{\mathbb{R}^d} (\Delta_{h_{k-1}} \cdots \Delta_{h_1} g)(s) \,\textup{d}s \Big|^2 \,\textup{d}h_1 \cdots \,\textup{d}h_{k-1} \Big)^{2^{-k}} \label{eq:gowersnorm}
\end{align}
We will only need the norms $\|\cdot\|_{\textup{U}^2(\mathbb{R}^d)}$ and $\|\cdot\|_{\textup{U}^3(\mathbb{R}^d)}$. The Gowers norms scale properly with respect to the $\textup{L}^1$-normalized dilations of the function. In particular,
\begin{equation}\label{eq:gowersscaling}
\|g_t\|_{\textup{U}^2(\mathbb{R}^d)} = t^{-d/4} \|g\|_{\textup{U}^2(\mathbb{R}^d)}, \quad
\|g_t\|_{\textup{U}^3(\mathbb{R}^d)} = t^{-d/2} \|g\|_{\textup{U}^3(\mathbb{R}^d)},
\end{equation}
as is shown by an easy change of variables of integration.

\begin{lemma}\label{lm:2combcs}
Suppose that $\lambda$ and $N$ are real numbers such that $0<\lambda\leq N$.
\begin{itemize}
\item[(a)] If $f_1,f_2\colon\mathbb{R}^{d}\to[0,1]$ are measurable functions supported in $[0,N]^d$, and $g\colon\mathbb{R}^{d}\to\mathbb{R}$ is a measurable function supported in $[-3\lambda,3\lambda]^d$, then
\begin{equation}\label{eq:lemmanewa}
\Big| \int_{(\mathbb{R}^{d})^2} f_1(x) f_2(x+s) g(s) \,\textup{d}s \,\textup{d}x \Big| \lesssim_d N^{d} \lambda^{d/4} \|g\|_{\textup{U}^2(\mathbb{R}^d)}.
\end{equation}
\item[(b)] If $f_1,f_2,f_3\colon\mathbb{R}^{2d}\to[0,1]$ are measurable functions supported in $[0,N]^{2d}$ and $g\colon\mathbb{R}^{d}\to\mathbb{R}$ is a measurable function supported in $[-3\lambda,3\lambda]^d$, then
\begin{equation}\label{eq:lemmanewb}
\Big| \int_{(\mathbb{R}^{d})^3} f_1(x,y) f_2(x+s,y) f_3(x,y+s) g(s) \,\textup{d}s \,\textup{d}x \,\textup{d}y \Big| \lesssim_d N^{2d} \lambda^{d/2} \|g\|_{\textup{U}^3(\mathbb{R}^d)}.
\end{equation}
\end{itemize}
\end{lemma}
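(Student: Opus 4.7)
The plan is to prove both parts by iterated Cauchy--Schwarz, with the number of iterations matching the Gowers level: two for part~(a) and three for part~(b). At each iteration I would peel off one of the bounded factors $f_j$ using the trivial $L^2$-bound $\|f_j\mathbbm{1}_{[0,N]^d}\|_2^2\leq N^{d}$ (respectively $N^{2d}$ in part~(b)). After every $f_j$ has been removed, the support condition $\operatorname{supp}g\subseteq[-3\lambda,3\lambda]^d$ will restrict each difference parameter $h_i$ to a cube of volume $(12\lambda)^d$; one final Cauchy--Schwarz, combined with the identity
\[ \|g\|_{\textup{U}^k(\mathbb{R}^d)}^{2^k}=\int_{(\mathbb{R}^d)^{k-1}}\Big|\int_{\mathbb{R}^d}(\Delta_{h_{k-1}}\cdots\Delta_{h_1}g)(s)\,\textup{d}s\Big|^2\,\textup{d}h_1\cdots\textup{d}h_{k-1}, \]
will then extract the $\textup{U}^k$-norm of $g$.

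For part~(a) I first apply Cauchy--Schwarz in $x$, expand the square, introduce $h=s'-s$, and change variables $u=x+s$; this decouples the $f_2$- and $g$-parts to yield the upper bound
\[ N^{d}\int_{\mathbb{R}^d}\Big(\int f_2(u)f_2(u+h)\,\textup{d}u\Big)\Big(\int g(s)g(s+h)\,\textup{d}s\Big)\,\textup{d}h. \]
The $f_2$-autocorrelation is at most $\|f_2\|_2^2\leq N^d$, and the $g$-autocorrelation vanishes outside $h\in[-6\lambda,6\lambda]^d$, so a Cauchy--Schwarz in $h$ produces the volume factor $(12\lambda)^{d/2}$ and the $\textup{U}^2$-norm of $g$, combining to the bound~\eqref{eq:lemmanewa}.

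For part~(b) the first Cauchy--Schwarz in $(x,y)$ peels off $f_1$; after expansion and the substitution $h_1=s'-s$, the resulting integrand contains $f_2(x+s,y)f_2(x+s+h_1,y)$, $f_3(x,y+s)f_3(x,y+s+h_1)$, and $g(s)g(s+h_1)$. The sheared substitution $y\mapsto y+s$ converts the $f_3$-factors into $F_3(x,y;h_1):=f_3(x,y)f_3(x,y+h_1)$, now independent of $s$. A second Cauchy--Schwarz in $(x,y,h_1)$, with $h_1$ effectively restricted to $[-6\lambda,6\lambda]^d$ by $\operatorname{supp}g$, peels off $F_3$ and leaves an $L^2$-norm of the remaining $f_2$--$g$ expression. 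Expanding this $L^2$-norm with $h_2=s'-s$ and applying the further sheared change $(x,y)\mapsto(x+s,y-s)$ also removes $s$ from the $f_2$-factors; the $s$-integration then factors as $\int\Delta_{h_2}\Delta_{h_1}g(s)\,\textup{d}s$. A final Cauchy--Schwarz in $(h_1,h_2)\in[-6\lambda,6\lambda]^{2d}$ together with the $\textup{U}^3$-identity above produces~\eqref{eq:lemmanewb}.

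The principal technical point is identifying the sheared changes of variables---$y\mapsto y+s$ at the intermediate step and $(x,y)\mapsto(x+s,y-s)$ at the final step of part~(b)---that successively decouple each $f_j$ from the integration variable $s$. These substitutions are dictated by the corner geometry of the pattern, and once they are inserted at the right stages the rest of the argument reduces to routine Cauchy--Schwarz estimates combined with the trivial $L^\infty$-bounds on the $f_j$-autocorrelations.
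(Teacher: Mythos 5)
Your proposal is correct and follows essentially the same route as the paper's proof: iterated Cauchy--Schwarz (two rounds for the $\textup{U}^2$ bound, three for $\textup{U}^3$), with shearing changes of variables to decouple the $f_j$-factors from $s$, the support of $g$ restricting each difference variable to a cube of volume $\sim\lambda^d$, and a final Cauchy--Schwarz in the $h$-variables extracting the Gowers norm. The only (immaterial) difference is the order in part (b) --- you decouple and peel off $f_3$ before $f_2$, whereas the paper shifts $x\mapsto x-s$ to handle $f_2$ first --- which by the symmetry of the corner pattern yields the same powers of $N$ and $\lambda$.
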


\begin{proof}[Proof of Lemma~\ref{lm:2combcs}]
This lemma will be shown in a way similar to the proof of Lemma 4.2 in \cite{CMP15:roth}.

Proof of (a).
The left hand side of \eqref{eq:lemmanewa} can be rewritten as
\[ \Big| \int_{[0,N]^d} f_1(x) \Big( \int_{\mathbb{R}^d} f_2(x+s) g(s) \,\textup{d}s \Big) \,\textup{d}x \Big|, \]
so the Cauchy--Schwarz inequality in $x$ bounds it by
\[ N^{d/2} \bigg( \int_{(\mathbb{R}^d)^3} f_2(x+s) f_2(x+s') g(s) g(s') \,\textup{d}s \,\textup{d}s' \,\textup{d}x \bigg)^{1/2}. \]
Substituting $y=x+s$, $h=s'-s$, applying the Cauchy--Schwarz inequality again, and recognizing the $\textup{U}^2$-norm from \eqref{eq:gowersnorm}, we get
\[ N^{d/2} \|g\|_{\textup{U}^2(\mathbb{R}^d)} \bigg( \int_{[-6\lambda,6\lambda]^d} \Big( \int_{[0,N]^d} f_2(y) f_2(y+h) \,\textup{d}y \Big)^2 \,\textup{d}h \bigg)^{1/4}. \]
The expression within the outer parentheses is clearly at most a constant times $N^{2d}\lambda^{d}$.

Proof of (b).
Using the Cauchy--Schwarz inequality we first bound the left hand side of \eqref{eq:lemmanewb} by
\[ N^d \bigg( \int_{(\mathbb{R}^{d})^4} f_2(x+s,y) f_2(x+s',y) f_3(x,y+s) f_3(x,y+s') g(s) g(s') \,\textup{d}s \,\textup{d}s' \,\textup{d}x \,\textup{d}y \bigg)^{1/2}. \]
Then we substitute $h=s'-s$ and perform the shift $x\mapsto x-s$ to transform the integral inside parentheses into
\[ \int_{(\mathbb{R}^{d})^4} f_2(x,y) f_2(x+h,y) f_3(x-s,y+s) f_3(x-s,y+s+h) g(s) g(s+h) \,\textup{d}x \,\textup{d}y \,\textup{d}s \,\textup{d}h. \]
Another application of the Cauchy--Schwarz inequality controls the left hand side of \eqref{eq:lemmanewb} with
\begin{align*}
N^{3d/2} \lambda^{d/4} \bigg( \int_{(\mathbb{R}^{d})^5} & f_3(x-s,y+s) f_3(x-s',y+s') f_3(x-s,y+s+h) f_3(x-s',y+s'+h) \\
& g(s) g(s') g(s+h) g(s'+h) \,\textup{d}s \,\textup{d}s' \,\textup{d}x \,\textup{d}y \,\textup{d}h \bigg)^{1/4}.
\end{align*}
It remains to substitute $h'=s'-s$, shift $x\mapsto x+s$, $y\mapsto y-s$, use the Cauchy--Schwarz inequality one more time, and finally recognize $\|g\|_{\textup{U}^3(\mathbb{R}^d)}$ from \eqref{eq:gowersnorm}.
\end{proof}

\begin{lemma}\label{lm:3combgowers}
Suppose that $\eta$ and $\varepsilon$ are real numbers such that $0<\eta<\varepsilon<1$.
\begin{itemize}
\item[(a)] For $d\geq 5$ we have
\[ \big\| \omega^{d,2,\eta} - \omega^{d,2,\varepsilon} \big\|_{\textup{U}^2(\mathbb{R}^d)} \lesssim_{d} \varepsilon^{1/4}. \]
\item[(b)] For $1<p<\infty$, $p\neq 2$ and sufficiently large $d$ depending on $p$ we have
\[ \big\| \omega^{d,p,\eta} - \omega^{d,p,\varepsilon} \big\|_{\textup{U}^3(\mathbb{R}^d)} \lesssim_{p,d} \varepsilon. \]
\end{itemize}
\end{lemma}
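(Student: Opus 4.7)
The plan is to pass to the Fourier side using the Plancherel identity $\|g\|_{U^2(\mathbb{R}^d)}^4 = \int_{\mathbb{R}^d}|\widehat g(\xi)|^4\,d\xi$ and the iterated formula $\|g\|_{U^3(\mathbb{R}^d)}^8 = \int\int|\widehat{\Delta_h g}(\xi)|^4\,d\xi\,dh$, reducing both parts to quantitative decay estimates for the Fourier transform of $g := \omega^{d,p,\varepsilon}-\omega^{d,p,\eta}$. Interchanging orders of integration in the definition of $\omega^{d,p,\varepsilon}$,
\[ \widehat g(\xi) = \int_{\mathbb{R}} e^{-2\pi i u}\bigl(\psi(\varepsilon u)-\psi(\eta u)\bigr) I_p(u,\xi)\,du, \qquad I_p(u,\xi) := \int_{\mathbb{R}^d} e^{2\pi i(u\|s\|_p^p - s\cdot\xi)}\,ds. \]
The factor $\psi(\varepsilon u)-\psi(\eta u)$ vanishes at $u=0$, obeys $|\psi(\varepsilon u)-\psi(\eta u)|\lesssim \min(\varepsilon|u|,1)$, and inherits Schwartz decay for large $|u|$; this is the sole source of $\varepsilon$-smallness.

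For part (a), $I_2(u,\xi) = (2|u|)^{-d/2} e^{i(\mathrm{sgn}\,u)d\pi/4} e^{-\pi i|\xi|^2/(2u)}$ is exactly Gaussian, so the remaining $u$-integral has the one-dimensional phase $-u - |\xi|^2/(4u)$ with nondegenerate stationary points at $u = \pm|\xi|/2$. Standard stationary phase combined with the bound on the $\psi$-difference yields
\[ |\widehat g(\xi)| \lesssim_d \min(\varepsilon|\xi|,1)(1+|\xi|)^{-(d-1)/2}. \]
Raising to the fourth power and splitting the $\xi$-integral at $|\xi|=1/\varepsilon$, a short computation gives $\int |\widehat g|^4\,d\xi \lesssim_d \varepsilon^{\min(4,d-2)}$ (up to a logarithmic factor when $d=6$), which is $\lesssim_d \varepsilon$ once $d\ge 5$; the fourth root delivers the claimed $\|\cdot\|_{U^2}\lesssim_d\varepsilon^{1/4}$ bound.

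For part (b) with $p\ne 2$, $I_p(u,\xi)$ is no longer Gaussian but is evaluated asymptotically by stationary phase in $s$: the unique critical point is $s_i^* = \mathrm{sgn}(\xi_i)(|\xi_i|/(pu))^{1/(p-1)}$, the Hessian is diagonal and nondegenerate away from the coordinate axes $\{\xi_i=0\}$, and one obtains amplitude $\lesssim_{d,p}|u|^{-d/(2(p-1))}\prod_i|\xi_i|^{-(p-2)/(2(p-1))}$ with an extracted phase $\propto \|\xi\|_q^q\, u^{-1/(p-1)}$, $q = p/(p-1)$. A further stationary phase in $u$ (now with a non-radially-symmetric phase, reflecting the anisotropy of the $\ell^p$-ball when $p\ne 2$), weighted by $\min(\varepsilon|u|,1)$, produces pointwise Fourier decay of $\widehat g$ which, when inserted into the $U^3$-Fourier formula and integrated over $(\xi,h)$, yields $\|g\|_{U^3}^8\lesssim_{d,p}\varepsilon^8$ provided $d$ is large enough depending on $p$ for all the frequency integrals to converge.

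The main obstacle is the degeneration of stationary phase for $I_p(u,\xi)$ with $p\ne 2$ near the coordinate axes $\{\xi_i=0\}$, where the Hessian in $s$ loses eigenvalues. A dyadic decomposition of $\xi$-space according to the relative sizes of $|\xi_1|,\ldots,|\xi_d|$ is required, with tailored near-axis estimates (losing decay) balanced by integrability in the complementary transverse directions; this balance is exactly what fixes the dimensional threshold $d_{\min}$ in part (b). A secondary challenge is propagating the $\varepsilon$-gain from the $\psi$-difference through the eighth-order $U^3$ integration so that the anisotropic $\ell^p$-curvature and the $\psi$-difference conspire to deliver the sharp $\varepsilon$ rate rather than merely $\varepsilon^{1/4}$.
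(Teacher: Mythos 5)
For part (a) your route is genuinely different from the paper's and is essentially workable. The paper inserts a spatial cutoff $\Phi=\varphi^{\otimes d}$, uses the integral triangle inequality and the tensor-product multiplicativity of the $\textup{U}^2$ norm to reduce to the one-dimensional quantity $\|\varphi(s)e^{2\pi ius^2}\|_{\textup{U}^2(\mathbb{R})}\lesssim\min\{1,|u|^{-1/4}\}$, proved by a direct integration by parts; you instead use $\|g\|_{\textup{U}^2}^4=\|\widehat g\|_{\textup{L}^4}^4$ and a two-stage stationary phase. Your pointwise bound $|\widehat g(\xi)|\lesssim\min(\varepsilon|\xi|,1)(1+|\xi|)^{-(d-1)/2}$ is the correct sphere-type decay and the ensuing integration is fine. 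Two technical points need attention: the interchange of the $u$- and $s$-integrations is not absolutely convergent, so you must insert a compactly supported cutoff in $s$ (as the paper does) before evaluating the Fresnel integral; and the amplitude $\psi(\varepsilon u)-\psi(\eta u)$ varies on scale $\varepsilon^{-1}$, so the stationary phase at $u=\pm|\xi|/2$ needs a word when $|\xi|\sim\varepsilon^{-1}$.

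Part (b) contains a genuine gap. The paper does not reprove this statement at all --- it is quoted as Lemma~2.4 of \cite{CMP15:roth} --- but your proposed mechanism would not prove it. You plan to derive pointwise Fourier decay of $\widehat g$ and then ``insert it into the $\textup{U}^3$-Fourier formula.'' The formula $\|g\|_{\textup{U}^3}^8=\int\!\!\int|\widehat{\Delta_hg}(\xi)|^4\,\textup{d}\xi\,\textup{d}h$ involves the Fourier transforms of the \emph{modulated products} $\Delta_hg$, and pointwise decay of $\widehat g$ gives no control over these: a chirp $\varphi(s)e^{2\pi iu\|s\|_{\ell^2}^2}$ has an extremely spread-out, pointwise small Fourier transform, yet its $\textup{U}^3$ norm does not decay in $u$, precisely because $\Delta_{h_2}\Delta_{h_1}(u\|s\|_{\ell^2}^2)$ is constant in $s$. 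Your chain of estimates uses $p\neq2$ only to discuss degeneracy of the Hessian near coordinate hyperplanes, i.e.\ the quality of the curvature bound for $\widehat g$; but for $p=2$ the curvature is optimal and $\widehat g$ decays fastest, so your argument would yield the conclusion most easily in exactly the case $p=2$ --- where the statement is false, and must be false, since Lemma~\ref{lm:3combgowers}(b) with $p=2$ would feed into Proposition~\ref{prop:2difference}(b) and Theorem~\ref{thm:main2} and contradict Bourgain's counterexample for $3$-term progressions with Euclidean gaps. The correct mechanism (as in \cite{CMP15:roth}) bounds $\|\varphi(s)e^{2\pi iu\|s\|_{\ell^p}^p}\|_{\textup{U}^3}$ directly, exploiting that the third-order finite differences of $s\mapsto|s|^p$ are non-degenerate exactly when $p\neq2$; this is where the hypothesis $p\neq2$ must enter, and it cannot be recovered from decay estimates for $\widehat g$ alone.
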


\begin{proof}[Proof of Lemma~\ref{lm:3combgowers}]
Part (b) was already established in \cite{CMP15:roth}; it is Lemma~2.4 of that paper.

We will show part (a) using the same lines of proof, but we need to use to our advantage the fact that we only need the $\textup{U}^2$-norm and get more concrete decay in $\varepsilon$ as $\varepsilon\to0^+$.
Let $\varphi\colon\mathbb{R}\to[0,\infty)$ be a compactly supported $\textup{C}^\infty$ function that is constantly equal to $1$ on $[-3,3]$ and set
\[ \Phi = \underbrace{\varphi\otimes\cdots\otimes\varphi}_{d}. \]
All constants are assumed to depend on $\varphi$ without further mention.
Observe that $\omega^{d,2,\eta}$ and $\omega^{d,2,\varepsilon}$ are supported on $[-3,3]^d$, so
\[ \omega^{d,2,\eta}(s) - \omega^{d,2,\varepsilon}(s)
= \int_{\mathbb{R}} ( \psi(\eta u) - \psi(\varepsilon u) ) \Phi(s) e^{2\pi i u (\|s\|_{\ell^2}^2-1)} \,\textup{d}u. \]
Integral version of the triangle inequality for the Gowers norm and the tensor product splitting of the exponential give
\begin{equation}\label{eq:u2aux1}
\big\| \omega^{d,2,\eta} - \omega^{d,2,\varepsilon} \big\|_{\textup{U}^2(\mathbb{R}^d)}
\leq \int_{\mathbb{R}} | \psi(\eta u) - \psi(\varepsilon u) | \big\| \varphi(s) e^{2\pi i u s^2} \big\|_{\textup{U}^2_{s}(\mathbb{R})}^d \,\textup{d}u.
\end{equation}
By definition of the Gowers norm \eqref{eq:gowersnorm},
\[ \big\| \varphi(s) e^{2\pi i u s^2} \big\|_{\textup{U}^2_{s}(\mathbb{R})}^4
= \int_{[-3,3]} \Big| \int_{\mathbb{R}} \varphi(s) \varphi(s+h) e^{-4 \pi i u h s} \,\textup{d}s \Big|^2 \,\textup{d}h, \]
which is certainly bounded by a constant, for each $u\in\mathbb{R}$. However, for $|u|\geq 1$ we get a better estimate by splitting the outer domain of integration into $|h|\leq|u|^{-1}$ and $|u|^{-1}<|h|\leq 3$. The first part of the integral is clearly at most a constant times $|u|^{-1}$. Integration by parts in the second part gives
\[ \Big| \int_{\mathbb{R}} \varphi(s) \varphi(s+h) e^{-4 \pi i u h s} u h \,\textup{d}s \Big| \lesssim 1, \]
so that
\[ \int_{\{|u|^{-1}<|h|\leq3\}} \Big| \int_{\mathbb{R}} \varphi(s) \varphi(s+h) e^{-4 \pi i u h s} \,\textup{d}s \Big|^2 \,\textup{d}h
\lesssim |u|^{-2} \int_{\{|u|^{-1}<|h|\leq3\}} h^{-2} \,\textup{d}h \lesssim |u|^{-1}. \]
From these we conclude
\begin{equation}\label{eq:u2aux2}
\big\| \varphi(s) e^{2\pi i u s^2} \big\|_{\textup{U}^2_{s}(\mathbb{R})} \lesssim \min\{1,|u|^{-1/4}\}.
\end{equation}

Now we combine \eqref{eq:u2aux1} and \eqref{eq:u2aux2} into a single estimate
\[ \big\| \omega^{d,2,\eta} - \omega^{d,2,\varepsilon} \big\|_{\textup{U}^2(\mathbb{R}^d)}
\lesssim \int_{\mathbb{R}} | \psi(\eta u) - \psi(\varepsilon u) | \min\{1,|u|^{-d/4}\} \,\textup{d}u. \]
This time we split the domain of integration into three parts: $|u|\leq 1$, $1<|u|\leq\varepsilon^{-1}$, and $|u|>\varepsilon^{-1}$.
We bound the corresponding integrals respectively as
\[ \int_{\{|u|\leq 1\}} \|\psi'\|_{\textup{L}^\infty(\mathbb{R})} \varepsilon |u| \,\textup{d}u \lesssim \varepsilon \lesssim \varepsilon^{1/4}, \]
\[ \int_{\{1<|u|\leq\varepsilon^{-1}\}} \|\psi'\|_{\textup{L}^\infty(\mathbb{R})} \varepsilon |u|^{1-d/4} \,\textup{d}u \lesssim_d \varepsilon^{d/4-1} \lesssim \varepsilon^{1/4}, \]
and
\[ \int_{\{|u|>\varepsilon^{-1}\}} 2\|\psi\|_{\textup{L}^\infty(\mathbb{R})} |u|^{-d/4} \,\textup{d}u \lesssim_d \varepsilon^{d/4-1} \lesssim \varepsilon^{1/4}. \]
In the last display we needed $d>4$ for the convergence of the improper integral and also to have $d/4-1\geq 1/4>0$.
This allows us to conclude the desired inequality.
\end{proof}

\begin{proof}[Proof of Proposition~\ref{prop:2difference}]
Both parts of the proposition are shown in exactly the same way, using Lemmata~\ref{lm:2combcs} and \ref{lm:3combgowers}, so we only elaborate on the proof of part (a).

Because of \eqref{eq:limits} it is enough to bound the difference
\begin{equation}\label{eq:diffaux}
\mathcal{M}^{2,\eta}_{\lambda}(f)-\mathcal{M}^{2,\varepsilon}_{\lambda}(f)
\end{equation}
for all $0<\eta<\varepsilon<1$, with a constant independent of $\eta$. The difference of the corresponding cutoff functions can be expanded as
\[ \omega^{p,\eta}_{\lambda\mathbf{a}}(\mathbf{s}) - \omega^{p,\varepsilon}_{\lambda\mathbf{a}}(\mathbf{s})
= \sum_{j=1}^{n} \tau^{(j)}_{\lambda}(\mathbf{s}), \]
where
\[ \tau^{(j)}(\mathbf{s}) := \Big( \prod_{i=1}^{j-1} \omega^{d_i,p,\eta}_{a_i}(s_i) \Big)
\big( \omega^{d_j,p,\eta}_{a_j}(s_j) - \omega^{d_j,p,\varepsilon}_{a_j}(s_j) \big)
\Big( \prod_{i=j+1}^{n} \omega^{d_i,p,\varepsilon}_{a_i}(s_i) \Big). \]
This decomposes \eqref{eq:diffaux} into $n$ pieces, $\sum_{j=1}^{n}\mathcal{P}^{(j)}_{\lambda}(f)$, where
\[ \mathcal{P}^{(j)}_{\lambda}(f) := \int_{(\mathbb{R}^{D})^2} (\mathcal{F}f)(\mathbf{x},\mathbf{s}) \tau^{(j)}_{\lambda}(\mathbf{s}) \,\textup{d}\mathbf{s} \,\textup{d}\mathbf{x}. \]
Without loss of generality we will estimate the piece $\mathcal{P}^{(1)}_{\lambda}(f)$. Part (a) of Lemma~\ref{lm:3combgowers} and \eqref{eq:gowersscaling} give
\[ \big\| \| \tau^{(j)}_{\lambda}(\mathbf{s}) \|_{\textup{U}^2_{s_1}(\mathbb{R}^{d_1})} \big\|_{\textup{L}^{\infty}_{s_2,\ldots,s_n}(\mathbb{R}^{D-d_1})}
\lesssim_D \lambda^{-d_{1}/4} \varepsilon^{1/4} \lambda^{-(d_2+\cdots+d_n)} = \varepsilon^{1/4} \lambda^{-D+3d_1/4}. \]
Then we observe that $(\mathcal{F}f)(\mathbf{x},\mathbf{s})$ can, for fixed $x_2,\ldots,x_n$, $s_2,\ldots,s_n$, be written in the form $f_1(x_1) f_2(x_1+s_1)$ from Lemma~\ref{lm:2combcs}, so we obtain
\[ \Big| \int_{(\mathbb{R}^{d_1})^2} (\mathcal{F}f)(\mathbf{x},\mathbf{s}) \tau^{(j)}_{\lambda}(\mathbf{s}) \,\textup{d}s_1 \,\textup{d}x_1 \Big|
\leq N^{d_1} \lambda^{d_{1}/4} \varepsilon^{1/4} \lambda^{-D+3d_{1}/4} = \varepsilon^{1/4} N^{d_1} \lambda^{-D+d_1}. \]
Integrating in $x_j\in[0,N]^{d_j}$ and $s_j\in[-3\lambda,3\lambda]^{d_j}$, $j=1,2,\ldots,n$ we finally get
\[ \big|\mathcal{P}^{(1)}_{\lambda}(f)\big| \lesssim_{D} \varepsilon^{1/4} N^{d_1} \lambda^{-D+d_1} N^{D-d_1} \lambda^{D-d_1} = \varepsilon^{1/4} N^D, \]
which completes the proof.
\end{proof}


\section{Analytical results}
\label{sec:analysis}

The main ingredient in the proof of Proposition~\ref{prop:3multiscale} is an estimate for multilinear singular integral forms. We formulate it as a separate theorem.

\begin{theorem}\label{thm:singular}\ \
\begin{itemize}
\item[(a)]
Suppose that $K\colon\mathbb{R}^{d_1}\times\cdots\times\mathbb{R}^{d_n}\to\mathbb{C}$ is a bounded compactly supported function and that its Fourier transform satisfies the standard symbol estimates (cf.\@ \cite{S93:habook}),
    \begin{equation}\label{eq:symbolest}
    |\widehat{K}(\xi)| \leq C_{\kappa} \|\xi\|_{\ell^2}^{-|\kappa|}
    \end{equation}
    for any multi-index $\kappa$. Then we have the inequality
\begin{equation}\label{est:thm10a}
 \Big| \int_{(\mathbb{R}^{D})^2} K(\mathbf{s}) \prod_{\mathbf{k}\in\{0,1\}^n} F_{\mathbf{k}}(x_1 + k_1 s_1,\,\dots,\,x_n + k_n s_n) \,\textup{d}\mathbf{s} \,\textup{d}\mathbf{x} \Big| \lesssim \prod_{\mathbf{k}\in\{0,1\}^n} \|F_{\mathbf{k}}\|_{\textup{L}^{2^n}}
\end{equation}
with the implicit constant depending only on $(C_{\kappa})_\kappa$ and the dimensions $d_i$.
\item[(b)]
Suppose that $K\colon\mathbb{R}^{d_1}\times\mathbb{R}^{d_1}\times\mathbb{R}^{d_2}\times\cdots\times\mathbb{R}^{d_n}\to\mathbb{C}$ is a bounded compactly supported function such that its Fourier transform satisfies the standard symbol estimates \eqref{eq:symbolest} for any multi-index $\kappa$. Then we have the inequality
\begin{align*}\nonumber
\bigg| \int_{\mathbb{R}^{3D+d_1}}
& K(s_1,s'_1,s_2,\ldots,s_n)
\Big( \prod_{\substack{\widetilde{\mathbf{k}}=(k_2,\ldots,k_n,l_1,l_2,l_3,l_4)\in\{0,1\}^{n+3}\\ l_1+l_2+l_3+l_4=1}} \\
& F_{\widetilde{\mathbf{k}}}(x_1 + l_1 s_1 + l_2 s'_1,\,x_2 + k_2 s_2,\,\dots,\,x_n + k_n s_n,\,y_1 + l_3 s_1 + l_4 s'_1,\, y_2,\dots, y_n) \Big) \\
& \,\textup{d}s_1 \,\textup{d}s'_1 \,\textup{d}s_2 \cdots \,\textup{d}s_n \,\textup{d}\mathbf{x} \,\textup{d}\mathbf{y} \bigg|
\lesssim \prod_{\substack{\widetilde{\mathbf{k}}=(k_2,\ldots,k_n,l_1,l_2,l_3,l_4)\in\{0,1\}^{n+3}\\ l_1+l_2+l_3+l_4=1}} \|F_{\widetilde{\mathbf{k}}}\|_{\textup{L}^{2^{n+1}}},
\end{align*}
with the implicit constant depending only on $(C_{\kappa})_\kappa$ and the dimensions $d_i$.
\end{itemize}
\end{theorem}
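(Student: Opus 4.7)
The plan is to identify both forms as singular entangled multilinear integrals of the type handled by the main theorem of \cite{DT18}, after suitable linear changes of variables that unfold the shift structure into a structure governed purely by a combinatorial hypergraph of output variables.

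For part~(a), I would substitute $y_j^{(0)} := x_j$ and $y_j^{(1)} := x_j + s_j$ in each coordinate block $j=1,2,\ldots,n$. These maps have unit Jacobian, so the left-hand side of \eqref{est:thm10a} becomes
\[
\int_{(\mathbb{R}^D)^2} K\bigl(y_1^{(1)}-y_1^{(0)},\ldots,y_n^{(1)}-y_n^{(0)}\bigr) \prod_{\mathbf{k}\in\{0,1\}^n} F_{\mathbf{k}}\bigl(y_1^{(k_1)},\ldots,y_n^{(k_n)}\bigr) \prod_{j=1}^{n} dy_j^{(0)}\,dy_j^{(1)}.
\]
This is precisely the $n$-dimensional \emph{box form} investigated in \cite{K12:tp,D15:L4,DKST17:nvea}, convolved in the difference variables against a kernel whose Fourier transform satisfies the standard symbol estimates~\eqref{eq:symbolest}. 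When $K$ is replaced by an $\textup{L}^1$-normalized non-negative Schwartz bump, the desired $\textup{L}^{2^n}$ product bound is the classical Gowers box inequality, proven by $n$ successive Cauchy--Schwarz applications. For a general $K$ satisfying \eqref{eq:symbolest}, I would perform a Littlewood--Paley decomposition $K=\sum_j K_j$ and invoke the singular Brascamp--Lieb estimate of \cite{DT18}, whose telescoping Cauchy--Schwarz scheme provides scale-uniform box-form bounds; the symbol estimates then yield geometric summability in $j$.

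For part~(b), I would perform the same box substitution in the coordinate blocks $j=2,\ldots,n$, yielding a box structure in those $n-1$ blocks. In the first coordinate pair, the four linear forms $(x_1+l_1 s_1+l_2 s'_1,\,y_1+l_3 s_1+l_4 s'_1)$ with $l_1+l_2+l_3+l_4=1$ form a three-point \emph{corner} attached to the anchor $(x_1,y_1)$, once one substitutes $u:=x_1$, $u':=x_1+s_1$, $u'':=x_1+s_1'$, and analogously $v,v',v''$ in the $y_1$ variable. The resulting integral is a tensor-type coupling of a corner pattern in $\mathbb{R}^{d_1}\times\mathbb{R}^{d_1}$ with a box pattern in $\mathbb{R}^{d_2}\times\cdots\times\mathbb{R}^{d_n}$, convolved against a symbol-type kernel in the differences. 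This is precisely the entangled structure treated in \cite{DKR17:corner,DT18}, and applying that theory gives the product of $\textup{L}^{2^{n+1}}$ norms bound.

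The main obstacle is verifying that the exact entanglement hypergraph of part~(b) falls within the hypotheses of \cite{DT18}. If a direct citation is not clean, my fallback would be to reduce part~(b) to part~(a) by a single Cauchy--Schwarz application in the $y_1$ direction: one fixes the corner ``tip'' and pairs up the four functions indexed by $(l_1,l_2,l_3,l_4)$ into a box structure in the remaining variables, duplicating the $2^{n-1}$ box variables and producing a box form in $n+1$ coordinate blocks of dimensions $d_1,d_1,d_2,\ldots,d_n$ with a squared kernel. Since the squared kernel is again of symbol type (modulo a standard splitting into diagonal and off-diagonal contributions), this reduces part~(b) to an instance of part~(a) applied one dimension higher, after which one takes a square root to recover the $\textup{L}^{2^{n+1}}$ bound.
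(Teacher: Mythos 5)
Your treatment of part~(a) in the case of equal block dimensions is essentially the paper's: the substitution $y_j^{(0)}=x_j$, $y_j^{(1)}=x_j+s_j$ turns the form into a cubical singular Brascamp--Lieb form with kernel projection $\Pi x=(y_1^{(1)}-y_1^{(0)},\ldots,y_n^{(1)}-y_n^{(0)})$, and one cites the main theorem of \cite{DT18}. (Your additional Littlewood--Paley remark is a red herring: summing single-scale box-form bounds over scales does not converge, and if you invoke \cite{DT18} you do not need the decomposition at all.) However, there are two genuine gaps. First, \cite{DT18} is a theorem about \emph{cubical} structure on $(\mathbb{R}^d)^{2n}$ with all $n$ blocks of the same dimension $d$; your argument never addresses the case of distinct $d_1,\ldots,d_n$. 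The paper spends roughly half of the proof on exactly this: it sets $d=\max_i d_i+1$, lifts $K$ to a kernel $\widetilde K$ on $\mathbb{R}^{nd}$ whose Fourier transform is $\widehat K(\xi)\varphi(\widetilde\xi/\|\xi\|)$, tensors each $F_{\mathbf{k}}$ with $\textup{L}^{2^n}$-normalized bumps in the auxiliary $d-d_i$ directions, and recovers the original form in the limit $\lambda\to\infty$. Some such device (or a sectioning argument) is needed; without it the citation of \cite{DT18} does not cover the stated theorem.

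Second, for part~(b) you correctly identify the crux --- whether the entangled corner-plus-box pattern fits the hypotheses of \cite{DT18} --- but you neither resolve it nor offer a workable fallback. The paper's resolution is a specific change of variables, $x_{n+1}^0:=x_1+y_1+s_1$ and $x_{n+1}^1:=x_1+y_1+s_1'$, followed by shearing the four functions indexed by $e_1,\ldots,e_4$; this converts the corner entanglement into an $(n+1)$-block cubical form whose kernel depends on the nonstandard projection $(x_{n+1}^0-x_1^1-x_1^0,\,x_2^1-x_2^0,\ldots,x_n^1-x_n^0,\,x_{n+1}^1-x_1^1-x_1^0)$, which is still admissible for \cite{DT18}. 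Your fallback --- one more Cauchy--Schwarz in $y_1$ to reduce (b) to (a) --- does not work: the functions with $l_3+l_4=1$ depend on $y_1+s_1$ or $y_1+s_1'$ while those with $l_1+l_2=1$ depend on $x_1+s_1$ or $x_1+s_1'$, so after Cauchy--Schwarz the duplicated functions still share the gap variables $s_1,s_1'$ with the retained ones and the entanglement persists; the resulting object is not a box form of type (a) in any set of independent variables, and the ``squared kernel'' is not a symbol in the variables one would need. Indeed, the $n=1$ instance of part~(b) is Theorem~1.3 of \cite{DKR17:corner}, which is known not to follow from box-type estimates by a single Cauchy--Schwarz; it is genuinely a statement about an entangled (twisted) form.
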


Note that the implicit constants in both inequalities claimed by Theorem~\ref{thm:singular} depend only on the implicit constants from the symbol estimates \eqref{eq:symbolest} and the dimensions, the latter being regarded as fixed throughout the text. Also observe that part (b) of the theorem specialized to $n=1$ coincides with Theorem~1.3 from \cite{DKR17:corner}, the main analytic result of that paper.

Once Theorem~\ref{thm:singular} is established, it is easy to complete the proof of Proposition~\ref{prop:3multiscale}. Let us elaborate on that argument and postpone the proof of the theorem to the second half of this section.

\begin{proof}[Proof of Proposition~\ref{prop:3multiscale}]
For the proof of part (a), note that the cutoff function appearing in $\mathcal{E}^{p,\varepsilon}_{\lambda_m}(f)$ is
\[ \omega^{p,\varepsilon}_{\lambda_m \mathbf{a}}(\mathbf{s}) - b(p,\varepsilon) \omega^{p,1}_{\lambda_m \mathbf{a}}(\mathbf{s})
= \prod_{j=1}^{n} \omega^{d_j,p,\varepsilon}_{\lambda_m a_j}(s_j) - \prod_{j=1}^{n} c(d_j,p,\varepsilon) \omega^{d_j,p,1}_{\lambda_m a_j}(s_j). \]
Recalling the introduction of $k^{d,p,\varepsilon}$ in \eqref{eq:defofk}, we can rewrite it as
\begin{equation}\label{eq:ekernel}
\sum_{j=1}^{n} \Big(\prod_{i=1}^{j-1} \omega^{d_i,p,\varepsilon}_{\lambda_m a_i}(s_i)\Big) k^{d_j,p,\varepsilon}_{\lambda_m a_j}(s_j) \Big(\prod_{i=j+1}^{n} c(d_i,p,\varepsilon) \omega^{d_i,p,1}_{\lambda_m a_i}(s_i)\Big).
\end{equation}
Note that each summand in \eqref{eq:ekernel} is of the form
\begin{equation}\label{eq:ekernel2}
\varphi^{(1)}_{\lambda_m}(s_1) \varphi^{(2)}_{\lambda_m}(s_2) \dots \varphi^{(n)}_{\lambda_m}(s_n),
\end{equation}
where $\varphi^{(j)}$, $j=1,2,\ldots,n$, are $\textup{C}^1$ functions and one of them has integral equal to $0$, while the others are nonnegative.
Take arbitrary signs $\alpha_m\in\{-1,1\}$, $m=1,2,\ldots,M$. By a standard computation (see \cite{CMP15:roth}) the kernels
\[ K(\mathbf{s}) := \sum_{m=1}^{M} \alpha_m \varphi^{(1)}_{\lambda_m}(s_1) \varphi^{(2)}_{\lambda_m}(s_2) \dots \varphi^{(n)}_{\lambda_m}(s_n) \]
satisfy the conditions from Theorem~\ref{thm:singular}, with constants $C_\kappa$ independent of the numbers $M$, $\lambda_1,\ldots,\lambda_M$ and signs $\alpha_1,\ldots,\alpha_M$, but we allow dependencies on the dimensions (i.e.\@ $D$), on $\varepsilon$, on the numbers $a_1,\ldots,a_n$, and on the exponent $p$.
Applying part (a) of Theorem~\ref{thm:singular} to those kernels and $F_\mathbf{k}=f$, we obtain
\[ \Big| \sum_{m=1}^{M} \alpha_m \mathcal{E}^{p,\varepsilon}_{\lambda_m}(f) \Big| \lesssim_{D,\varepsilon} \|f\|_{\textup{L}^{2^n}}^{2^n} \leq N^D. \]
It remains to choose the signs $\alpha_m$ appropriately, so that the left hand side becomes\linebreak $\sum_{m=1}^{M} |\mathcal{E}^{p,\varepsilon}_{\lambda_m}(f)|$.

In the proof of part (b) we begin with the same splitting \eqref{eq:ekernel} into summands of the form \eqref{eq:ekernel2}.
Since the notation has become symmetric in $j$, without loss of generality we can suppose $\int_{\mathbb{R}^{d_1}} \varphi^{(1)}=0$, i.e.\@ the cancellation comes from the variable $s_1$.
Gathering inside parentheses all factors containing that variable, the corresponding part of $\widetilde{\mathcal{E}}^{p,\varepsilon}_{\lambda_m}(f)$ can be rewritten as
{\allowdisplaybreaks\begin{align*}
\int_{\mathbb{R}^{3D-d_1}}
& \bigg( \int_{\mathbb{R}^{d_1}} \Big( \prod_{(k_2,\ldots,k_n)\in\{0,1\}^{n-1}} f(x_1 + s_1,\,x_2 + k_2 s_2,\,\dots,\,x_n + k_n s_n,\,y_1, \dots, y_n) \Big) \\
& \qquad\quad f(x_1, \dots, x_n,\,y_1 + s_1, \dots, y_n) \varphi^{(1)}_{\lambda_m}(s_1) \,\textup{d}s_1 \bigg) \\
& \Big( \prod_{(k_2,\ldots,k_n)\in\{0,1\}^{n-1}} f(x_1,\,x_2 + k_2 s_2,\,\dots,\,x_n + k_n s_n,\,y_1, \dots, y_n) \Big) \\
& \qquad\quad f(x_1, \dots, x_n,\,y_1, y_2 + s_2, \dots, y_n) \cdots f(x_1, \dots, x_n,\,y_1, y_2,\dots, y_n + s_n) \\
& \varphi^{(2)}_{\lambda_m}(s_2) \dots \varphi^{(n)}_{\lambda_m}(s_n) \,\textup{d}s_2 \cdots \,\textup{d}s_n \,\textup{d}\mathbf{x} \,\textup{d}\mathbf{y}.
\end{align*}}
By the Cauchy--Schwarz inequality its square is bounded with
\begin{equation}\label{eq:propaux1}
\mathcal{A}_{\lambda_m}(f) \mathcal{B}_{\lambda_m}(f),
\end{equation}
where
\begin{align*}
\mathcal{A}_{\lambda}(f) := \int_{\mathbb{R}^{3D+d_1}}
& \Big( \prod_{\substack{\widetilde{\mathbf{k}}=(k_2,\ldots,k_n,l_1,l_2,l_3,l_4)\in\{0,1\}^{n+3}\\ l_1+l_2+l_3+l_4=1}} \\
& f_{\widetilde{\mathbf{k}}}(x_1 + l_1 s_1 + l_2 s'_1,\,x_2 + k_2 s_2,\,\dots,\,x_n + k_n s_n,\,y_1 + l_3 s_1 + l_4 s'_1,\, y_2,\dots, y_n) \Big) \\
& \varphi^{(1)}_{\lambda}(s_1) \varphi^{(1)}_{\lambda}(s'_1) \varphi^{(2)}_{\lambda}(s_2) \dots \varphi^{(n)}_{\lambda}(s_n) \,\textup{d}s_1 \,\textup{d}s'_1 \,\textup{d}s_2 \cdots \,\textup{d}s_n \,\textup{d}\mathbf{x} \,\textup{d}\mathbf{y}
\end{align*}
and
\begin{align*}
\mathcal{B}_{\lambda}(f) := \int_{\mathbb{R}^{3D-d_1}}
& \Big( \prod_{(k_2,\ldots,k_n)\in\{0,1\}^{n-1}} \mathbbm{1}_{[0,N]^{2D}}(x_1,\,x_2 + k_2 s_2,\,\dots,\,x_n + k_n s_n,\,y_1, \dots, y_n) \Big) \\
& \mathbbm{1}_{[0,N]^{2D}}(x_1, \dots, x_n,\,y_1, y_2 + s_2, \dots, y_n) \\
& \cdots \mathbbm{1}_{[0,N]^{2D}}(x_1, \dots, x_n,\,y_1, y_2,\dots, y_n + s_n) \\
& \varphi^{(2)}_{\lambda}(s_2) \dots \varphi^{(n)}_{\lambda}(s_n) \,\textup{d}s_2 \cdots \,\textup{d}s_n \,\textup{d}\mathbf{x} \,\textup{d}\mathbf{y}.
\end{align*}
The functions $f_{\widetilde{\mathbf{k}}}$ appearing in the definition of $\mathcal{A}_{\lambda}(f)$ are all supported in $[0,N]^{2D}$ and taking values in $[0,1]$; some of them are equal to $f$, while the others are artificially inserted into the expression as $\mathbbm{1}_{[0,2N]^{2D}}$.
Clearly,
\begin{equation}\label{eq:propaux2}
\mathcal{B}_{\lambda_m}(f)
\leq \Big( \prod_{j=2}^{n} \int_{\mathbb{R}^{d_j}} \varphi^{(j)}_{\lambda_m} \Big) \Big( \prod_{j=1}^{n} \int_{\mathbb{R}^{d_j}} \mathbbm{1}_{[0,N]^{d_j}} \Big)^2
\lesssim_{D,\varepsilon} N^{2D}.
\end{equation}
On the other hand, part (b) of Theorem~\ref{thm:singular} applied with the kernel
\[ K(\mathbf{s}) := \sum_{m=1}^{M} \varphi^{(1)}_{\lambda_m}(s_1) \varphi^{(1)}_{\lambda_m}(s'_1) \varphi^{(2)}_{\lambda_m}(s_2) \dots \varphi^{(n)}_{\lambda_m}(s_n) \]
and the functions $F_{\widetilde{\mathbf{k}}}=f_{\widetilde{\mathbf{k}}}$ yields
\begin{equation}\label{eq:propaux3}
\Big| \sum_{m=1}^{M} \mathcal{A}_{\lambda_m}(f) \Big|
\lesssim_{D,\varepsilon} N^{2D}.
\end{equation}
Summing the products \eqref{eq:propaux1} in $m=1,2,\ldots,M$, from \eqref{eq:propaux2} and \eqref{eq:propaux3} we finally conclude
\[ \sum_{m=1}^{M} |\widetilde{\mathcal{E}}^{p,\varepsilon}_{\lambda_m}(f)|^2 \lesssim_{D,\varepsilon} N^{4D}, \]
as claimed.
\end{proof}

We finalize the paper with the proof of the remaining analytical result.
As we will soon see, the case when all dimensions $d_i$ are equal will be an easy consequence of the main result from the paper by Thiele and one of the present authors \cite{DT18}.
We will spend just a slight additional effort to reduce the case of possibly different dimensions $d_i$ to the very same result.
An alternative to this addition could be considering appropriate lower-dimensional sections of the sets and functions appearing throughout the paper.

\begin{proof}[Proof of Theorem~\ref{thm:singular}]
Proof of (a).
First we consider the case of equal dimensions, i.e.\@ $d_1 = d_2 =\cdots = d_n$, and we write them simply as $d$.
Relabeling $x_i$ to $x_i^0$ in the left hand-side of \eqref{est:thm10a} and changing variables $x_i^0 + s_i = x_i^1$
for $i=1,\ldots,n$, we see that we need to show
\begin{align*}
\Big| \int_{\mathbb{R}^{2dn}} \prod_{\mathbf{k}\in \{0,1\}^n} F_{\mathbf{k}} (\Pi_\mathbf{k} x) K(\Pi x) \,\textup{d}x \Big |
\lesssim_{(C_{\kappa})_\kappa} \prod_{\mathbf{k}\in\{0,1\}^n} \|F_{\mathbf{k}}\|_{\textup{L}^{2^n}(\mathbb{R}^{nd})},
\end{align*}
where $x=(x_1^0,\ldots, x_n^0,x_1^1,\ldots x_n^1)\in (\mathbb{R}^d)^{2n}$ and $\Pi_\mathbf{k},\Pi : (\mathbb{R}^{d})^{2n} \rightarrow (\mathbb{R}^{d})^n$ are linear operators given by
\begin{align*}
\Pi_\mathbf{k}x := (x_1^{k_1},\ldots, x_n^{k_n}),\quad \Pi x := (x_1^1-x_1^0,\ldots, x_n^1-x_n^0).
\end{align*}
This estimate can be recognized as one of the singular Brascamp--Lieb inequalities from the main theorem of \cite{DT18}, which establishes the claim in the case of equal dimensions.

The general case of different dimensions in \eqref{est:thm10a} will be deduced from the case of equal dimensions as follows. By approximating $K$ with smooth compactly supported functions in $\textup{L}^1$ and applying H\"older's inequality, we may assume  that $K$ is a smooth compactly supported function on $\mathbb{R}^D \cong \mathbb{R}^{d_1}\times \cdots \times \mathbb{R}^{d_n}$, satisfying the symbol estimates \eqref{eq:symbolest}. We set
\[d:=\max_{1\leq i \leq n} d_i+1.\]
(We have added $1$ for technical reasons, so that $d-d_i>0$ for all $1\leq i \leq n$.)
First we define a new function $\widetilde{K}$ on $\mathbb{R}^{nd}$, whose integration in a certain direction gives $K$.
To achieve this take a smooth compactly supported function $\varphi$ on $\mathbb{R}^{nd-D}\cong \mathbb{R}^{d-d_1}\times \cdots \times \mathbb{R}^{d-d_n}$ satisfying $\varphi(\mathbf{0})=1$. Then we define $\widetilde{K}$ by setting
\begin{align}\label{eq:defKtilde}
\widehat{\widetilde{K}}(\xi,\widetilde{\xi}) := \widehat{K}(\xi) \varphi \Big ( \frac{\widetilde{\xi}}{\|\xi\|} \Big ).
\end{align}
for $\mathbf{0}\neq \xi\in \mathbb{R}^{D}$, $\widetilde{\xi}\in \mathbb{R}^{nd-D}$.
Observe that we have
\begin{align*}
\widehat{\widetilde{K}}(\xi,0) = \widehat{K}(\xi)
\end{align*}
or, passing to the spatial side,
 \begin{align*}
K(\mathbf{s}) = \int_{\mathbb{R}^{nd-D}} \widetilde{K}(\mathbf{s},\mathbf{\widetilde{s}}) \,\textup{d}\mathbf{\widetilde{s}}.
\end{align*}
Moreover, the function $\widehat{\widetilde{K}}$ satisfies the symbol estimates
    \begin{equation*}
    \big|\widehat{\widetilde{K}}(\xi,\widetilde{\xi})\big| \leq \widetilde{C}_{\kappa} \|(\xi,\widetilde{\xi})\|_{\ell^2}^{-|\kappa|}
    \end{equation*}
for all multi-indices $\kappa$ and all $(\xi,\widetilde{\xi})\neq 0$, with $\widetilde{C}_{\kappa}$ depending only on $C_\kappa$.

Assuming the estimate \eqref{est:thm10a} in the case of equal dimensions,
let us plug in the kernel $\widetilde{K}$ and the functions $\widetilde{F}_{\mathbf{k}}: (\mathbb{R}^d)^n\rightarrow \mathbb{C}$ defined by
\begin{align*}
\widetilde{F}_{\mathbf{k}}(\mathbf{z},\mathbf{\widetilde{z}}) := F_{\mathbf{k}}(\mathbf{z}) (\textup{D}_{\lambda}^{2^n}\varphi_1\otimes \cdots \otimes \textup{D}_{\lambda}^{2^n}\varphi_n)(\mathbf{\widetilde{z}}),
\end{align*}
where $\mathbf{z}\in \mathbb{R}^{D},\,\mathbf{\widetilde{z}}\in \mathbb{R}^{nd-D}$, $F_{\mathbf{k}}:\mathbb{R}^{D} \rightarrow \mathbb{C}$, $\varphi_i$ is a Schwartz function on $\mathbb{R}^{d-d_i}$ and $\textup{D}_{\lambda}^{2^n} \varphi_i$ is defined by \eqref{eq:funcdilatep}. Then
the form in question becomes
\begin{align}\nonumber
\int_{\mathbb{R}^{2nd}} \widetilde{K}(\mathbf{s},\mathbf{\widetilde{s}})\, \Big (&\, \prod_{\mathbf{k}\in\{0,1\}^n} F_{\mathbf{k}}(x_1 + k_1 s_1,\,\dots,\,x_n + k_n s_n) \\
& \textup{D}_{\lambda}^{2^n}\varphi_1(\widetilde{x}_1 + k_1 \widetilde{s}_1)\cdots \textup{D}_{\lambda}^{2^n}\varphi_n(\widetilde{x}_n + k_n \widetilde{s}_n) \Big )
\,\textup{d}\mathbf{s}\,\textup{d}\mathbf{\widetilde{s}} \,\textup{d}\mathbf{x} \,\textup{d}\mathbf{\widetilde{x}}, \label{eq:lhs}
\end{align}
where $\mathbf{\widetilde{s}} = (\widetilde{s}_1,\ldots , \widetilde{s}_n),\, \mathbf{\widetilde{x}} = (\widetilde{x}_1,\ldots , \widetilde{x}_n)\in \mathbb{R}^{nd-D}$.
From the case of equal dimensions we know that the last display is bounded by
\begin{align}
\prod_{\mathbf{k}\in \{0,1\}^n} \|\widetilde{F}_{\mathbf{k}}\|_{\textup{L}^{2^n}(\mathbb{R}^{nd})} = \|\varphi_1\otimes \cdots \otimes \varphi_n\|^{2^n}_{\textup{L}^{2^n}(\mathbb{R}^{nd-D})}\prod_{\mathbf{k}\in \{0,1\}^n} \|F_{\mathbf{k}}\|_{\textup{L}^{2^n}(\mathbb{R}^{D})} \label{eq:boundconstant}
\end{align}
times a constant depending only on $(C_\kappa)_\kappa$.
On the other hand, \eqref{eq:lhs} equals
\begin{align}\nonumber
\int_{\mathbb{R}^{2nd}} & \widetilde{K}(\mathbf{s},\mathbf{\widetilde{s}}) \Big( \prod_{i=1}^n \lambda^{-(d-d_i)}\varphi_i(\lambda^{-1}\widetilde{x}_i)^{2^{n-1}}\varphi_i(\lambda^{-1}(\widetilde{x}_i + \widetilde{s}_i))^{2^{n-1}} \Big ) \\
\label{eq:lhs2}
& \prod_{\mathbf{k}\in\{0,1\}^n} F_{\mathbf{k}}(x_1 + k_1 s_1,\,\dots,\,x_n + k_n s_n) \,\textup{d}\mathbf{s}\,\textup{d}\mathbf{\widetilde{s}} \,\textup{d}\mathbf{x} \,\textup{d}\mathbf{\widetilde{x}} .
\end{align}
Integrating in $\mathbf{\widetilde{x}}$ we obtain
\begin{align}\label{eq:analytic1}
\int_{\mathbb{R}^{nd+D}} \widetilde{K}(\mathbf{s},\mathbf{\widetilde{s}}) \Phi(\lambda^{-1}\mathbf{\widetilde{s}}) \prod_{\mathbf{k}\in\{0,1\}^n} F_{\mathbf{k}}(x_1 + k_1 s_1,\,\dots,\,x_n + k_n s_n)
 \,\textup{d}\mathbf{s}\,\textup{d}\mathbf{\widetilde{s}} \,\textup{d}\mathbf{x},
\end{align}
where we have set
\[\Phi:=\big(  \varphi_1^{2^{n-1}}\ast \widetilde{\varphi}_1^{\,2^{n-1}} \big )\otimes \cdots \otimes \big( \varphi_n^{\, 2^{n-1}}\ast \widetilde{\varphi}_n^{2^{n-1}}\big ) \]
and $\widetilde{\varphi}_i(s):=\varphi_i(-s).$
Integrating in $\mathbf{\widetilde{s}}$ and taking the limit as $\lambda\rightarrow \infty$, \eqref{eq:analytic1} becomes, up to a constant,
\begin{align}\label{eq:analytic2}
\int_{(\mathbb{R}^D)^2}{K}(\mathbf{s}) \prod_{\mathbf{k}\in\{0,1\}^n} F_{\mathbf{k}}(x_1 + k_1 s_1,\,\dots,\,x_n + k_n s_n)
 \,\textup{d}\mathbf{s} \,\textup{d}\mathbf{x}
\end{align}
and we know that it is bounded by \eqref{eq:boundconstant}, as desired.

To justify passage to the limit we observe that the difference of \eqref{eq:analytic1} and \eqref{eq:analytic2} equals
\begin{align*}
\int_{(\mathbb{R}^D)^2} \Big ( \int_{\mathbb{R}^{nd-D}} \widetilde{K}(\mathbf{s},\mathbf{\widetilde{s}}) \big(\Phi(\lambda^{-1}\mathbf{\widetilde{s}}) - 1\big) \,\textup{d}\mathbf{\widetilde{s}}\Big ) \prod_{\mathbf{k}\in\{0,1\}^n} F_{\mathbf{k}}(x_1 + k_1 s_1,\,\dots,\,x_n + k_n s_n)
 \, \textup{d}\mathbf{s}\, \,\textup{d}\mathbf{x},
\end{align*}
which tends to zero as $\lambda\rightarrow \infty$.
Indeed, this follows by applying
H\"older's inequality in $\mathbf{x}$, which bounds the last display by
\begin{align*}
\Big( \int_{\mathbb{R}^D} \Big | \int_{\mathbb{R}^{nd-D}} \widetilde{K}(\mathbf{s},\mathbf{\widetilde{s}}) \big(\Phi(\lambda^{-1}\mathbf{\widetilde{s}}) - 1\big) \,\textup{d}\mathbf{\widetilde{s}} \, \Big | \,\textup{d}\mathbf{s} \Big ) \prod_{\mathbf{k}\in \{0,1\}^n} \|{F}_{\mathbf{k}}\|_{\textup{L}^{2^n}(\mathbb{R}^{D})} .
\end{align*}
We note that the expression in the bracket
tends to zero as $\lambda \rightarrow \infty$, as desired.

Proof of (b).
Note that it suffices to show the bound
\begin{align}\nonumber
\bigg| \int_{\mathbb{R}^{2D+2d_1}}
& K(\mathbf{s},s'_1)
\Big( \prod_{\substack{\widetilde{\mathbf{k}}=(k_2,\ldots,k_n,l_1,l_2,l_3,l_4)\in\{0,1\}^{n+3}\\ l_1+l_2+l_3+l_4=1}} \\ \nonumber
& F_{\widetilde{\mathbf{k}}}(x_1 + l_1 s_1 + l_2 s'_1,\,x_2 + k_2 s_2,\,\dots,\,x_n + k_n s_n,\,y_1 + l_3 s_1 + l_4 s'_1) \Big) \\ \label{est:thm10b}
& \,\textup{d}\mathbf{s} \,\textup{d}s'_1 \,\textup{d}\mathbf{x} \,\textup{d}y_1\bigg|
\lesssim_{(C_{\kappa})_\kappa} \prod_{\substack{\widetilde{\mathbf{k}}=(k_2,\ldots,k_n,l_1,l_2,l_3,l_4)\in\{0,1\}^{n+3}\\ l_1+l_2+l_3+l_4=1}} \|F_{\widetilde{\mathbf{k}}}\|_{\textup{L}^{2^{n+1}}(\mathbb{R}^{(n+1)d})}
\end{align}
for functions $F_{\widetilde{\mathbf{k}}}:\mathbb{R}^{D+d_1}\rightarrow \mathbb{C}$, where $\mathbf{s}=(s_1,\ldots, s_n)\in \mathbb{R}^{D}$. Indeed,
part (b) of Theorem \ref{thm:singular} then follows by Fubini, applying the estimate \eqref{est:thm10b} and H\"older's inequality in $y_2,\ldots , y_n$.

To show \eqref{est:thm10b} we again first consider the case of equal dimensions, $d_1= \cdots =d_n$. Once again, we write them simply as $d$.
Changing variables $x_1+y_1+s_1 = x_{n+1}^0$, $x_1+y_1+s_1'=x_{n+1}^1$, we obtain
{\allowdisplaybreaks\begin{align*}
\int_{\mathbb{R}^{2d(n+1)}}
& K(x_{n+1}^0-x_1-y_1,s_2,\ldots,s_n,x_{n+1}^1-x_1-y_1) \Big(\,\prod_{k=(k_2,\ldots,k_n)\in \{0,1\}^{n-1}} \\
 & F_{(k,e_1)}(x_{n+1}^0-y_1, x_2 + k_2 s_2,\,\dots,\,x_n + k_n s_n,y_1)\\
& F_{(k,e_2)}(x_{n+1}^1-y_1, x_2 + k_2 s_2,\,\dots,\,x_n + k_n s_n,y_1)\\
& F_{(k,e_3)}(x_1, x_2 + k_2 s_2,\,\dots,\,x_n + k_n s_n,\,x_{n+1}^0-x_1)\\
& F_{(k,e_4)}(x_1, x_2 + k_2 s_2,\,\dots,\,x_n + k_n s_n,\,x_{n+1}^1-x_1) \Big) \,\textup{d}x_{n+1}^0 \,\textup{d}x_{n+1}^1\,\textup{d}s_2 \cdots \,\textup{d}s_n \,\textup{d}\mathbf{x} \,\textup{d}y_1,
\end{align*}}
where $e_i$ are standard unit vectors in $\mathbb{R}^4$.
Shearing the functions $F_{(k,e_i)}$ we see that it suffices to show estimate an estimate for the form
{\allowdisplaybreaks\begin{align*}
\int_{\mathbb{R}^{2d(n+1)}}
&K(x_{n+1}^0-x_1-y_1,s_2,\ldots,s_n,x_{n+1}^1-x_1-y_1) \Big( \, \prod_{k=(k_2,\ldots,k_n)\in \{0,1\}^{n-1}} \\
& F_{(k,e_1)}(x_{n+1}^0, x_2 + k_2 s_2,\,\dots,\,x_n + k_n s_n,y_1)\\
& F_{(k,e_2)}(x_{n+1}^1, x_2 + k_2 s_2,\,\dots,\,x_n + k_n s_n,\,y_1)\\
& F_{(k,e_3)}(x_1, x_2 + k_2 s_2,\,\dots,\,x_n + k_n s_n,\,x_{n+1}^0)\\
& F_{(k,e_4)}(x_1, x_2 + k_2 s_2,\,\dots,\,x_n + k_n s_n,\,x_{n+1}^1) \Big) \,\textup{d}x_{n+1}^0 \,\textup{d}x_{n+1}^1 \,\textup{d}s_2 \cdots \,\textup{d}s_n \,\textup{d}\mathbf{x} \,\textup{d}y_1.
\end{align*}}
We relabel $y_1$ into $x_1^1$, $x_i$ into $x_i^0$ for $1\leq i \leq n$, and change variables $x_i^0 + s_i = x_i^1$ for $2\leq i \leq n$.
Then we see that it suffices to show the estimate
\begin{align*}
\Big| \int_{\mathbb{R}^{2d(n+1)}} \prod_{\mathbf{k}\in \{0,1\}^{n+1}} F_{\mathbf{k}} (\Pi_\mathbf{k} x) K(\Pi x) \,\textup{d}x \Big |
\lesssim_{(C_{\kappa})_\kappa} \prod_{\mathbf{k}\in\{0,1\}^n} \|F_{\mathbf{k}}\|_{\textup{L}^{2^{n+1}}(\mathbb{R}^{(n+1)d})}
\end{align*}
for
$x=(x_1^0,\ldots, x_{n+1}^0,x_1^1,\ldots x_{n+1}^1)\in (\mathbb{R}^d)^{n+1}$ and linear operators $\Pi_\mathbf{k},\Pi : (\mathbb{R}^{d})^{2(n+1)} \rightarrow (\mathbb{R}^{d})^{n+1}$ given by
\begin{align*}
\Pi_\mathbf{k}x := (x_1^{k_1},\ldots, x_{n+1}^{k_{n+1}}),\quad \Pi x := (x_{n+1}^0-x_1^1-x_1^0, x_2^1-x_2^0,\ldots, x_{n}^1-x_{n}^0, x_{n+1}^1-x_1^1-x_1^0).
\end{align*}
This estimate again follows from the main result in \cite{DT18}.

To finish the proof of \eqref{est:thm10b} it remains to deduce the case of different dimensions from the case of equal dimensions. This follows similarly as in part (a) and we only sketch the necessary modifications.
Let $d$ be defined as in the proof of part (a) of this theorem.
Assuming the estimate \eqref{est:thm10b} in the case of equal dimensions,
let us plug in the kernel $\widetilde{K}$ defined on $(\mathbb{R}^d)^{n+1}$ as in \eqref{eq:defKtilde} and the functions $F_{\mathbf{\widetilde{k}}}\colon (\mathbb{R}^d)^{n+1}\rightarrow \mathbb{C}$ given by
\begin{align*}
\widetilde{F}_{\mathbf{\widetilde{k}}}(\mathbf{z},\mathbf{\widetilde{z}}) := F_{\mathbf{\widetilde{k}}}(\mathbf{z}) (\textup{D}_{\lambda}^{2^{n+1}}\varphi_1\otimes \cdots \otimes \textup{D}_{\lambda}^{2^{n+1}}\varphi_{n} \otimes \textup{D}_{\lambda}^{2^{n+1}}\varphi_{1})(\mathbf{\widetilde{z}}),
\end{align*}
where $\mathbf{z}\in \mathbb{R}^{D+d_1},\,\mathbf{\widetilde{z}}\in \mathbb{R}^{(n+1)d-D-d_1}$, $F_{\mathbf{\widetilde{k}}}:\mathbb{R}^{D+d_1}\rightarrow \mathbb{C}$, $\varphi_i$ is a Schwartz function on $\mathbb{R}^{d-d_i}$ for $1\leq i \leq n$, and $\textup{D}_{\lambda}^{2^{n+1}} \varphi_i$ was defined in \eqref{eq:funcdilatep}. Then the form in \eqref{est:thm10b} becomes, analogously to the display \eqref{eq:lhs2} in part (a),
{\allowdisplaybreaks\begin{align}
\int_{\mathbb{R}^{2d(n+1)}} \nonumber
& \widetilde{K}(\mathbf{s}, s'_1)\,
\lambda^{-(d-d_1)}\varphi_1(\lambda^{-1}\widetilde{x}_1)^{2^{n}}\varphi_1(\lambda^{-1}(\widetilde{x}_1 + \widetilde{s}_1))^{2^{n-1}} \varphi_1(\lambda^{-1}(\widetilde{x}_1 + \widetilde{s}'_1))^{2^{n-1}} \\ \nonumber
& \Big( \prod_{i=2}^n \lambda^{-(d-d_i)}\varphi_i(\lambda^{-1}\widetilde{x}_i)^{2^{n}}\varphi_i(\lambda^{-1}(\widetilde{x}_i + \widetilde{s}_i))^{2^{n}} \Big ) \\ \nonumber
&\lambda^{-(d-d_1)}\varphi_{1}(\lambda^{-1}\widetilde{y}_1)^{2^{n}}\varphi_{1}(\lambda^{-1}(\widetilde{y}_1+ \widetilde{s}_{1}))^{2^{n-1}} \varphi_{1}(\lambda^{-1}(\widetilde{y}_1 + \widetilde{s}'_{1}))^{2^{n-1}} \\ \nonumber
&\Big (\, \prod_{\substack{\widetilde{\mathbf{k}}=(k_2,\ldots,k_n,l_1,l_2,l_3,l_4)\in\{0,1\}^{n+3}\\ l_1+l_2+l_3+l_4=1}} \\ \nonumber
&F_{\widetilde{\mathbf{k}}}(x_1 + l_1 s_1 + l_2 s'_1,\,x_2 + k_2 s_2,\,\dots,\,x_n + k_n s_n,\,y_1 + l_3 s_1 + l_4 s'_1)\, \Big ) \\ \label{form:partb}
& \,\textup{d}\mathbf{s}\,\textup{d}s'_1 \,\textup{d}\mathbf{\widetilde{s}} \,\textup{d}\widetilde{s}'_1 \,\textup{d}\mathbf{x} \,\textup{d}\mathbf{\widetilde{x}} \,\textup{d}y_1 \,\textup{d}\widetilde{y_1},
\end{align}}
where $\mathbf{\widetilde{s}} = (\widetilde{s}_1,\ldots, \widetilde{s}_n)\in \mathbb{R}^{nd-D}, \,\widetilde{s}'_1\in \mathbb{R}^{d-d_1},\, \widetilde{y}_1\in \mathbb{R}^{d-d_1}$.
From the case of equal dimensions we know that it is bounded by a constant times
\begin{align*}
\prod_{\mathbf{\widetilde{k}}} \|\widetilde{F}_{\mathbf{\widetilde{k}}}\|_{\textup{L}^{2^{n+1}}(\mathbb{R}^{(n+1)d})}
= \|\varphi_1\otimes \cdots \otimes \varphi_n \otimes \varphi_1 \|^{2^{n+1}}_{\textup{L}^{2^{n+1}}(\mathbb{R}^{(n+1)d-D-d_1})}
\prod_{\mathbf{\widetilde{k}}} \|F_{\mathbf{k}}\|_{\textup{L}^{2^{n+1}}(\mathbb{R}^{D+d_1})}.
\end{align*}

On the other hand, integrating in $\mathbf{\widetilde{x}}$ and $\widetilde{y}_1$ gives that the form \eqref{form:partb} equals
\begin{align*}
\int_{\mathbb{R}^{2d(n+1)}}
& \widetilde{K}(\mathbf{s}, s'_1)\,
\Phi_1(\lambda^{-1}\widetilde{s_1},\lambda^{-1}\widetilde{s}_1') ^2 \Phi (\lambda^{-1}(\widetilde{s}_2,\ldots,\widetilde{s}_n))
\, \Big(\, \prod_{\substack{\widetilde{\mathbf{k}}=(k_2,\ldots,k_n,l_1,l_2,l_3,l_4)\in\{0,1\}^{n+3}\\ l_1+l_2+l_3+l_4=1}} \\
& \!\!\! F_{\widetilde{\mathbf{k}}}(x_1 + l_1 s_1 + l_2 s'_1,\,x_2 + k_2 s_2,\,\dots,\,x_n + k_n s_n,\,y_1 + l_3 s_1 + l_4 s'_1)\, \Big) \,\textup{d}\mathbf{s}\,\textup{d}s'_1 \,\textup{d}\mathbf{\widetilde{s}} \,\textup{d}\widetilde{s}'_1 \,\textup{d}\mathbf{x} \,\textup{d}y_1,
\end{align*}
where
\[ \Phi_1(\widetilde{s}_1,\widetilde{s}'_1) := \int_{\mathbb{R}^{d-d_1}} \varphi_i(u)^{2^{n}}\varphi_i(u + \widetilde{s}_1)^{2^{n-1}} \varphi_1(u + \widetilde{s}'_1)^{2^{n-1}} \,\textup{d}u \]
and
\[ \Phi:= \big ( \varphi_2^{2^{n}}\ast \widetilde{\varphi}_2^{\, 2^{n}}   \big )\otimes \cdots \otimes \big ( \varphi_n^{2^{n}}\ast \widetilde{\varphi}_n^{\, 2^{n}} \big ), \]
where $\widetilde{\varphi}_i$ is defined as in (a).
Taking the limit as $\lambda \rightarrow \infty$ and integrating in $\mathbf{\widetilde{s}}, \widetilde{s}_1'$, similarly as in the proof of part (a), we recover the form on the left hand-side of \eqref{est:thm10b}.
\end{proof}


\section*{Acknowledgments}
The authors thank Christoph Thiele for inspiring discussions aided by the bilateral DAAD-MZO grant \emph{Multilinear singular integrals and applications}.
The authors are also grateful to the anonymous referee for useful suggestions.
V. K. was supported in part by the Croatian Science Foundation under the project UIP-2017-05-4129 (MUNHANAP).


\end{document}